\newtheorem{thm}{Theorem}[section]
\newtheorem{lem}[thm]{Lemma}
\newtheorem{cor}[thm]{Corollary}
\newtheorem{pro}[thm]{Proposition}
\newtheorem{ex}[thm]{Example}
\theoremstyle{definition}
\newtheorem{rmk}[thm]{Remark}
\newtheorem{defi}[thm]{Definition}
\newcommand{\nc}{\newcommand}
\newcommand{\delete}[1]{}
\nc{\mlabel}[1]{\label{#1}}  
\nc{\mcite}[1]{\cite{#1}}  
\nc{\mref}[1]{\ref{#1}}  
\nc{\mbibitem}[1]{\bibitem{#1}} 
\nc{\mlabel}[1]{\label{#1}{\hfill \hspace{1cm}{\bf{{\ }\hfill(#1)}}}}
\nc{\mcite}[1]{\cite{#1}{{\bf{{\ }(#1)}}}}  
\nc{\mref}[1]{\ref{#1}{{\bf{{\ }(#1)}}}}  
\nc{\mbibitem}[1]{\bibitem[\bf #1]{#1}} 
\newcommand {\emptycomment}[1]{}
\newcommand{\emptycomment}[1]{}
\nc{\calo}{\mathcal{O}}
\nc{\oop}{$\mathcal{O}$-operator\xspace}
\nc{\oops}{$\mathcal{O}$-operators\xspace}
\nc{\mrho}{{\bm{\varrho}}}
\nc{\bfk}{\mathbf{K}}
\nc{\invlim}{\displaystyle{\lim_{\longleftarrow}}\,}
\nc{\ot}{\otimes}
\nc{\CV}{\mathbf{C}}
\newcommand{\lon }{\,\rightarrow\,}
\newcommand{\be }{\begin{equation}}
\newcommand{\ee }{\end{equation}}
\newcommand{\g}{\mathfrak g}
\newcommand{\huaB}{\mathcal{B}}
\newcommand{\huaH}{\mathcal{H}}
\newcommand{\huaO}{{\mathcal{O}}}
\newcommand{\huaZ}{\mathcal{Z}}
\newcommand{\frkT}{\mathfrak T}
\newcommand{\half}{\frac{1}{2}}
\newcommand{\Id}{{\rm{Id}}}
\newcommand{\br}[1]{   [ \cdot,    \cdot  ]   }
\newcommand{\Hom}{\mathrm{Hom}}
\newcommand{\Nij}{\mathrm{Nij}}
\newcommand{\Ob}{\mathsf{Ob}}
\newcommand{\gl}{\mathfrak {gl}}
\newcommand{\K}{\mathbf{K}}
\begin{document}

\title[Deformations of relative Rota-Baxter operators on Leibniz algebras]{Deformations of relative Rota-Baxter operators on Leibniz algebras}

\author{Rong Tang}
\address{Department of Mathematics, Jilin University, Changchun 130012, Jilin, China}
\email{tangrong@jlu.edu.cn}

\author{Yunhe Sheng}
\address{Department of Mathematics, Jilin University, Changchun 130012, Jilin, China}
\email{shengyh@jlu.edu.cn}

\author{Yanqiu Zhou}
\address{School of Science, Guangxi University of Science and Technology, Liuzhou 545006, China}
\email{1531734482@qq.com}
\date{\today}

\begin{abstract}
In this paper,  we introduce the  cohomology theory of relative Rota-Baxter operators on Leibniz algebras. We use the cohomological approach to study linear and formal deformations of relative Rota-Baxter operators. In particular, the notion of Nijenhuis elements
is introduced to characterize trivial linear deformations.
Formal deformations and extendibility of   order $n$ deformations
of a relative Rota-Baxter operator are also characterized in terms of the cohomology
theory.
\end{abstract}

\subjclass[2010]{17A32, 17B38, 17B62}

\keywords{cohomology, deformation, relative Rota-Baxter operator, Leibniz algebra}

\maketitle

\tableofcontents

\allowdisplaybreaks

\section{Introduction}\mlabel{sec:intr}

The notion of Rota-Baxter operators on associative algebras was introduced in 1960 by G. Baxter \cite{Ba} in his study of
fluctuation theory in probability. Recently it has been found many applications, including in Connes-Kreimer's algebraic approach to the renormalization in perturbative quantum
field theory~\cite{CK}. In the Lie algebra context, a Rota-Baxter
operator of weight zero
was introduced independently in the
1980s as the operator form of the classical Yang-Baxter equation, whereas the classical Yang-Baxter equation plays important roles in many fields in mathematics and mathematical physics such as quantum groups  and integrable systems \cite{CP,STS}. Rota-Baxter operators on super-type algebras were studied in \cite{AMM}, which build relationships between associative superalgebras, Lie superalgebras, L-dendriform superalgebras and pre-Lie superalgebras.
Recently Rota-Baxter operators on Leibniz algebras were studied in \cite{ST}, which is the main ingredient in the study of the twisting theory and the bialgebra theory for Leibniz algebras.
Generally, Rota-Baxter operators can define on operads, which establish a relationship between the splitting of an operad \cite{Bai-Bellier-Guo-Ni,PBG}. For further details on
Rota-Baxter operators, see ~\cite{Gub-AMS,Gub}.

The deformation of algebraic structures began with the seminal
work of Gerstenhaber~\cite{Ge0,Ge} for associative
algebras and followed by its extension to Lie algebras by
Nijenhuis and Richardson~\cite{NR,NR2}. Makhlouf and Silvestrov study the deformation theories of Hom-type algebras \cite{Makhlouf and Silvestrov}. In general, deformation theory was developed
for binary quadratic operads by Balavoine~\cite{Balavoine-1}.
Recently,   the deformation theories of morphisms and $\huaO$-operators were   developed in \cite{ABM,Fregier-Zambon-2,Fregier-Zambon-1,Tang-Bai-Guo-Sheng}. The concept of a Leibniz algebra was introduced by Loday \cite{Loday,Loday and Pirashvili} with the motivation in the study of the periodicity in algebraic K-theory. Recently the structure theories of semisimple Leibniz algebras and Cartan subalgebras of complex finite-dimensional Leibniz algebras were deep studied  in \cite{GKO,Omirov}.    

In this paper, we study linear deformations and formal deformations of relative Rota-Baxter operators on Leibniz algebras using the cohomological approach. For this purpose, we first define the cohomology of relative Rota-Baxter operators on Leibniz algebras. Then we study linear deformations and introduce the notion of a Nijenhuis element associated to a relative Rota-Baxter operator, which can give rise to trivial linear deformations. We go on studying formal deformations of relative Rota-Baxter operators on Leibniz algebras and show that the infinitesimal of a formal deformation is a 1-cocycle. Finally, we characterize the extendibility of   order $n$ deformations
of a relative Rota-Baxter operator   in terms of the cohomology theory.

The paper is organized as follows. In Section \ref{sec:K}, we introduce the notion of relative Rota-Baxter operators on Leibniz algebras with respect to representations. Given a relative Rota-Baxter operator, there is a natural Leibniz algebra on the representation space. We define the cohomology theory of a relative Rota-Baxter operator on a Leibniz algebra in terms of the cohomology of the Leibniz algebra on the representation space. In Section \ref{sec:def}, we study deformation theory of relative Rota-Baxter operators. Firstly, we study the linear deformation theory of relative Rota-Baxter operators on Leibniz algebras. We introduce the notion of a Nijenhuis element associated to a relative Rota-Baxter operator, which gives rise to a trivial linear deformation of the relative Rota-Baxter operator. We also build a relationship between   linear deformations of relative Rota-Baxter operators and linear deformations of the underlying Leibniz algebras. Secondly, we study the formal deformation theory of relative Rota-Baxter operators. We show that  the infinitesimals of two
equivalent  formal deformations of a  relative Rota-Baxter operator are in the
same first cohomology class of the relative Rota-Baxter operator. Under some condition, we study the rigidity of a relative Rota-Baxter operator. Finally, we study  deformations of order $n$ of a relative Rota-Baxter operator. We show that the obstructions to extension to  deformations of order $n+1$ are given by 2-cocycles.

\vspace{2mm}

\noindent
{\bf Data Availability Statements.} Data sharing is not applicable to this article as no new data were created or analyzed in this study.
\section{Cohomologies of a relative Rota-Baxter operator on a Leibniz algebra }\label{sec:K}

\begin{defi}
A {\bf Leibniz algebra} is a vector space $\g$ together with a bilinear operation $[\cdot,\cdot]_\g:\g\otimes\g\lon\g$ such that
\begin{eqnarray}
\label{Leibniz}[x,[y,z]_\g]_\g=[[x,y]_\g,z]_\g+[y,[x,z]_\g]_\g,\quad\forall x,y,z\in\g.
\end{eqnarray}
\end{defi}

A {\bf representation} of a Leibniz algebra $(\g,[\cdot,\cdot]_{\g})$ is a triple $(V;\rho^L,\rho^R)$, where $V$ is a vector space, $\rho^L,\rho^R:\g\lon\gl(V)$ are linear maps such that the following equalities hold for all $x,y\in\g$,
\begin{eqnarray}
\label{rep-1}\rho^L([x,y]_{\g})&=&[\rho^L(x),\rho^L(y)],\\
\label{rep-2}\rho^R([x,y]_{\g})&=&[\rho^L(x),\rho^R(y)],\\
\label{rep-3}\rho^R(y)\circ \rho^L(x)&=&-\rho^R(y)\circ \rho^R(x).
\end{eqnarray}
Here $[\cdot,\cdot]:\wedge^2\gl(V)\lon\gl(V)$ is the commutator Lie bracket on $\gl(V)$, the vector space of linear transformations on $V$.

Define the left multiplication $L:\g\longrightarrow\gl(\g)$ and the right multiplication $R:\g\longrightarrow\gl(\g)$ by $L_xy=[x,y]_\g$ and $R_xy=[y,x]_\g$ respectively for all $x,y\in \g$.  Then $(\g;L,R)$ is a representation of $(\g,[\cdot,\cdot]_{\g})$, which is called the {\bf regular representation}.

\begin{defi}{\rm (\cite{Loday and Pirashvili})}
Let $(V;\rho^L,\rho^R)$ be a representation of a Leibniz algebra $(\g,[\cdot,\cdot]_{\g})$.
The {\bf Loday-Pirashvili cohomology} of $\g$ with  coefficients in $V$ is the cohomology of the cochain complex $C^k(\g,V)=
\Hom(\otimes^k\g,V), ~(k\ge 0)$ with the coboundary operator
$\partial:C^k(\g,V)\longrightarrow C^
{k+1}(\g,V)$
defined by
\begin{eqnarray*}
(\partial f)(x_1,\cdots,x_{k+1})&=&\sum_{i=1}^{k}(-1)^{i+1}\rho^L(x_i)f(x_1,\cdots,\hat{x_i},\cdots,x_{k+1})+(-1)^{k+1}\rho^R(x_{k+1})f(x_1,\cdots,x_{k})\\
                      \nonumber&&+\sum_{1\le i<j\le k+1}(-1)^if(x_1,\cdots,\hat{x_i},\cdots,x_{j-1},[x_i,x_j]_\g,x_{j+1},\cdots,x_{k+1}),
\end{eqnarray*}
for all $x_1,\cdots, x_{k+1}\in\g$.
\end{defi}

A permutation $\sigma\in\mathbb S_n$ is called an $(i,n-i)$-{\bf shuffle} if $\sigma(1)<\cdots <\sigma(i)$ and $\sigma(i+1)<\cdots <\sigma(n)$. If $i=0$ or $n$ we assume $\sigma=\Id$. The set of all $(i,n-i)$-shuffles will be denoted by $\mathbb S_{(i,n-i)}$. The notion of an $(i_1,\cdots,i_k)$-shuffle and the set $\mathbb S_{(i_1,\cdots,i_k)}$ are defined analogously.

First we introduce the notion of a relative Rota-Baxter operator on a Leibniz algebra and give an example.
\begin{defi}
Let $(V;\rho^L,\rho^R)$ be a representation of a Leibniz algebra $(\g,[\cdot,\cdot]_\g)$. A linear operator $T:V\lon\g$ is called a {\bf relative Rota-Baxter operator} on $(\g,[\cdot,\cdot]_\g)$ with respect to $(V;\rho^L,\rho^R)$ if $T$ satisfies:
\begin{eqnarray}\label{Rota-Baxter}
[Tv_1,Tv_2]_\g=T(\rho^L(Tv_1)v_2+\rho^R(Tv_2)v_1),\,\,\,\,\forall v_1,v_2\in V.
\end{eqnarray}
\end{defi}

\begin{ex}\label{example-5}{\rm
Consider the $3$-dimensional Leibniz algebra $(\g,[\cdot,\cdot])$ given with respect to a basis $\{e_1,e_2,e_3\}$  by
$$
[e_1,e_1]=e_3.
$$
 Then $T=\left(\begin{array}{ccc}a_{11}&a_{12}&a_{13}\\
 a_{21}&a_{22}&a_{23}\\
 a_{31}&a_{32}&a_{33}\end{array}\right)$ is a relative Rota-Baxter operator on $(\g,[\cdot,\cdot])$ with respect to the regular representation if and only if
$$
   [Te_i,Te_j]=T([Te_i,e_j]+[e_i,Te_j]),\quad \forall i,j=1,2,3.
   $$
 We have
$
[Te_1,Te_1]=[a_{11}e_1+a_{21}e_2+a_{31}e_3,a_{11}e_1+a_{21}e_2+a_{31}e_3]=a_{11}^2e_3,
$
and
\begin{eqnarray*}
T([Te_1,e_1]+[e_1,Te_1])&=&T([a_{11}e_1+a_{21}e_2+a_{31}e_3,e_1]+[e_1,a_{11}e_1+a_{21}e_2+a_{31}e_3])\\
                        &=&2a_{11}Te_3=2a_{11}a_{13}e_1+2a_{11}a_{23}e_2+2a_{11}a_{33}e_3.
\end{eqnarray*}
Thus, by $[Te_1,Te_1]=T([Te_1,e_1]+[e_1,Te_1])$, we obtain
$$
a_{11}^2=2a_{11}a_{33},\quad a_{11}a_{13}=0,\quad a_{11}a_{23}=0.
$$
Similarly, by $[Te_1,Te_2]=T([Te_1,e_2]+[e_1,Te_2])$, we obtain
$$
a_{11}a_{12}=a_{12}a_{33},\quad a_{12}a_{13}=0,\quad a_{12}a_{23}=0.
$$
By $[Te_1,Te_3]=T([Te_1,e_3]+[e_1,Te_3])$, we obtain
$$
a_{11}a_{13}=a_{13}a_{33},\quad a_{13}a_{13}=0,\quad a_{13}a_{23}=0.
$$
By $[Te_2,Te_1]=T([Te_2,e_1]+[e_2,Te_1])$, we obtain
$$
a_{12}a_{11}=a_{12}a_{33},\quad a_{12}a_{13}=0,\quad a_{12}a_{23}=0.
$$
By $[Te_3,Te_1]=T([Te_3,e_1]+[e_3,Te_1])$, we obtain
$$
a_{13}a_{11}=a_{13}a_{33},\quad a_{13}a_{13}=0,\quad a_{13}a_{23}=0.
$$
By $[Te_i,Te_j]=T([Te_i,e_j]+[e_i,Te_j])=0,~i,j=2,3$, we obtain
$$
a_{12}^2=0,\quad a_{13}^2=0,\quad a_{12}a_{13}=0.
$$

\emptycomment{
\begin{eqnarray*}
[Ke_1,Ke_2]=[a_{11}e_1+a_{21}e_2+a_{31}e_3,a_{12}e_1+a_{22}e_2+a_{32}e_3]=a_{11}a_{12}e_3,
\end{eqnarray*}
and
\begin{eqnarray*}
K([Ke_1,e_2]+[e_1,Ke_2])&=&K[e_1,a_{12}e_1+a_{22}e_2+a_{32}e_3]\\
                        &=&a_{12}Ke_3=a_{12}a_{13}e_1+a_{12}a_{23}e_2+a_{12}a_{33}e_3.
\end{eqnarray*}
----
\begin{eqnarray*}
[Ke_1,Ke_3]=[a_{11}e_1+a_{21}e_2+a_{31}e_3,a_{13}e_1+a_{23}e_2+a_{33}e_3]=a_{11}a_{13}e_3,
\end{eqnarray*}
and
\begin{eqnarray*}
K([Ke_1,e_3]+[e_1,Ke_3])&=&K[e_1,a_{13}e_1+a_{23}e_2+a_{33}e_3]\\
                        &=&a_{13}Ke_3=a_{13}a_{13}e_1+a_{13}a_{23}e_2+a_{13}a_{33}e_3.
\end{eqnarray*}
-----

\begin{eqnarray*}
[Ke_2,Ke_1]=[a_{12}e_1+a_{22}e_2+a_{32}e_3,a_{11}e_1+a_{21}e_2+a_{31}e_3]=a_{12}a_{11}e_3,
\end{eqnarray*}
and
\begin{eqnarray*}
K([Ke_2,e_1]+[e_2,Ke_1])&=&K[a_{12}e_1+a_{22}e_2+a_{32}e_3,e_1]\\
                        &=&a_{12}Ke_3=a_{12}a_{13}e_1+a_{12}a_{23}e_2+a_{12}a_{33}e_3.
\end{eqnarray*}
-----

\begin{eqnarray*}
[Ke_3,Ke_1]=[a_{13}e_1+a_{23}e_2+a_{33}e_3,a_{11}e_1+a_{21}e_2+a_{31}e_3]=a_{13}a_{11}e_3,
\end{eqnarray*}
and
\begin{eqnarray*}
K([Ke_3,e_1]+[e_3,Ke_1])&=&K[a_{13}e_1+a_{23}e_2+a_{33}e_3,e_1]\\
                        &=&a_{13}Ke_3=a_{13}a_{13}e_1+a_{13}a_{23}e_2+a_{13}a_{33}e_3.
\end{eqnarray*}
-------
Moreover, we have
\begin{eqnarray*}
&&[Ke_2,Ke_2]=a_{12}^2e_3,\quad[Ke_2,Ke_3]=a_{12}a_{13}e_3\\
&&[Ke_3,Ke_2]=a_{13}a_{12}e_3,\quad[Ke_3,Ke_3]=a_{13}^2e_3.
\end{eqnarray*}
}
Summarize the above discussion, we have
\begin{itemize}
     \item[\rm(i)] If $a_{11}=a_{12}=a_{13}=0$, then  any $T=\left(\begin{array}{ccc}0&0&0\\
                                                                                    a_{21}&a_{22}&a_{23}\\
                                                                                    a_{31}&a_{32}&a_{33}\end{array}\right)$ is a relative Rota-Baxter operator on $(\g,[\cdot,\cdot])$ with respect to the regular representation.
     \item[\rm(ii)] If $a_{12}=a_{13}=0$ and $a_{11}\not=0,~a_{23}=0$, then  any $T=\left(\begin{array}{ccc}a_{11}&0&0\\
                                                                                    a_{21}&a_{22}&0\\
                                                                                    a_{31}&a_{32}&\half a_{11}\end{array}\right)$ is a relative Rota-Baxter operator on $(\g,[\cdot,\cdot])$ with respect to the regular representation.
   \end{itemize}
   }
\end{ex}

\begin{lem}\label{duble-Leibniz}
Let $T$ be a  relative Rota-Baxter operator on a Leibniz
algebra  $(\g,[\cdot,\cdot]_\g)$ with respect to $(V;\rho^L,\rho^R)$.  Define
\begin{eqnarray}\label{thm:rota-baxter-to-leibniz}
[u,v]_{T}=\rho^L(Tu)v+\rho^R(Tv)u,\quad u,v\in V.
\end{eqnarray}
Then $(V,[\cdot,\cdot]_{T})$ is a Leibniz algebra.
\end{lem}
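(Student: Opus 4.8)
The plan is to first reformulate the defining identity \eqref{Rota-Baxter} as the statement that $T$ intertwines the two brackets, i.e.
\[
T[u,v]_{T}=[Tu,Tv]_\g,\qquad u,v\in V,
\]
which is nothing but substituting \eqref{thm:rota-baxter-to-leibniz} into \eqref{Rota-Baxter}. This single observation is what makes the whole verification tractable: it lets me replace every occurrence of $T$ applied to a $[\cdot,\cdot]_{T}$-bracket by a $[\cdot,\cdot]_\g$-bracket of the $T$-images, after which only the operators $\rho^L$ and $\rho^R$ evaluated on elements of $\g$ survive, and these are controlled entirely by the representation axioms \eqref{rep-1}--\eqref{rep-3}.

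Next I would expand the Leibniz identity for $[\cdot,\cdot]_{T}$, that is, verify
\[
[u,[v,w]_{T}]_{T}=[[u,v]_{T},w]_{T}+[v,[u,w]_{T}]_{T}
\]
for all $u,v,w\in V$. Applying the definition \eqref{thm:rota-baxter-to-leibniz} twice to each of the three terms, and using the intertwining relation above to resolve the inner $T$ (so that $T[v,w]_T$ becomes $[Tv,Tw]_\g$, etc.), each term expands into a sum of compositions of the form $\rho^{\sharp}(Ta)\,\rho^{\flat}(Tb)\,c$ with $c\in\{u,v,w\}$ and $\sharp,\flat\in\{L,R\}$. I would then collect all the resulting summands according to which of $u$, $v$, $w$ they act on.

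The verification then splits into three independent checks, one per argument, with the representation axioms entering in a clean one-to-one correspondence. The terms acting on $w$ reduce to $\rho^L(Tu)\rho^L(Tv)=\rho^L([Tu,Tv]_\g)+\rho^L(Tv)\rho^L(Tu)$, which is exactly \eqref{rep-1} after writing out the commutator $[\rho^L(Tu),\rho^L(Tv)]$. The terms acting on $v$ reduce to $\rho^L(Tu)\rho^R(Tw)=\rho^R(Tw)\rho^L(Tu)+\rho^R([Tu,Tw]_\g)$, which is \eqref{rep-2}.

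The subtlest step, and the only one requiring more than a single axiom, is the collection of terms acting on $u$. These reduce to the requirement $\rho^R([Tv,Tw]_\g)=\rho^R(Tw)\rho^R(Tv)+\rho^L(Tv)\rho^R(Tw)$; applying \eqref{rep-2} to the left-hand side turns this into $\rho^R(Tw)\rho^R(Tv)=-\rho^R(Tw)\rho^L(Tv)$, which is precisely \eqref{rep-3} evaluated at $x=Tv$, $y=Tw$. Since each of the three groups cancels identically, the Leibniz identity \eqref{Leibniz} holds for $[\cdot,\cdot]_{T}$, so $(V,[\cdot,\cdot]_{T})$ is a Leibniz algebra. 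I expect the only real friction to be the bookkeeping in the collection step; once the intertwining relation $T[u,v]_{T}=[Tu,Tv]_\g$ is in hand, no genuine difficulty remains.
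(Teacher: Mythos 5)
Your proof is correct and takes essentially the same route as the paper: the paper likewise expands the Leibniz identity for $[\cdot,\cdot]_T$ using the definition, contracts every $T(\cdots)$-term via the Rota-Baxter identity \eqref{Rota-Baxter}, and the resulting cancellation is exactly your three per-argument groups handled by \eqref{rep-1}, \eqref{rep-2}, and \eqref{rep-2} combined with \eqref{rep-3}. Your explicit statement of the intertwining relation $T[u,v]_T=[Tu,Tv]_\g$ and the grouping by argument is just a tidier write-up of the computation the paper performs in one block.
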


\begin{proof}
For all $u,v,w\in V$, we have
\begin{eqnarray*}
&&[u,[v,w]_T]_T-[[u,v]_T,w]_T-[v,[u,w]_T]_T\\
&=&[u,\rho^L(Tv)w+\rho^R(Tw)v]_T-[\rho^L(Tu)v+\rho^R(Tv)u,w]_T-[v,\rho^L(Tu)w+\rho^R(Tw)u]_T\\
&=&\rho^L(Tu)\rho^L(Tv)w+\rho^R(T\rho^L(Tv)w)u+\rho^L(Tu)\rho^R(Tw)v+\rho^R(T\rho^R(Tw)v)u\\
&&-\rho^L(T\rho^L(Tu)v)w-\rho^R(Tw)\rho^L(Tu)v-\rho^L(T\rho^R(Tv)u)w-\rho^R(Tw)\rho^R(Tv)u\\
&&-\rho^L(Tv)\rho^L(Tu)w-\rho^R(T\rho^L(Tu)w)v-\rho^L(Tv)\rho^R(Tw)u-\rho^R(T\rho^R(Tw)u)v\\
&\stackrel{\eqref{Rota-Baxter}}{=}&0.
\end{eqnarray*}
Therefore, $(V,[\cdot,\cdot]_{T})$ is a Leibniz algebra.
\end{proof}

\begin{cor}
Let $T$ be a  relative Rota-Baxter operator on a Leibniz
algebra  $(\g,[\cdot,\cdot]_\g)$ with respect to $(V;\rho^L,\rho^R)$. Then   $T$ is a  homomorphism from the Leibniz algebra $(V,[\cdot,\cdot]_{T})$  to the initial Leibniz algebra $(\g,[\cdot,\cdot]_\g)$.
\end{cor}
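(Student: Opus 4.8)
The plan is to verify directly the defining identity of a Leibniz algebra homomorphism, namely that $T([u,v]_T)=[Tu,Tv]_\g$ for all $u,v\in V$. Since the source bracket $[\cdot,\cdot]_T$ on $V$ is the one constructed in Lemma \ref{duble-Leibniz} and the target bracket $[\cdot,\cdot]_\g$ is the original Leibniz bracket, this single identity is exactly what "homomorphism" means here.

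First I would apply $T$ to the explicit formula \eqref{thm:rota-baxter-to-leibniz} for the bracket on $V$, obtaining
\begin{eqnarray*}
T([u,v]_T)=T\big(\rho^L(Tu)v+\rho^R(Tv)u\big).
\end{eqnarray*}
Next I would read off the right-hand side as an instance of the relative Rota-Baxter condition \eqref{Rota-Baxter}: setting $v_1=u$ and $v_2=v$ in \eqref{Rota-Baxter} gives precisely $T(\rho^L(Tu)v+\rho^R(Tv)u)=[Tu,Tv]_\g$. Chaining these two equalities yields $T([u,v]_T)=[Tu,Tv]_\g$, which is the required compatibility, and since $u,v\in V$ were arbitrary the claim follows.

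There is essentially no obstacle: the statement is an immediate repackaging of the operator identity \eqref{Rota-Baxter}, now reinterpreted through the induced bracket of Lemma \ref{duble-Leibniz}. The only thing to keep straight is the bookkeeping of which bracket lives on which space, so that the substitution of \eqref{thm:rota-baxter-to-leibniz} and the recognition of \eqref{Rota-Baxter} are aligned in the correct order.
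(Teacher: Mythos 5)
Your proof is correct and is essentially the paper's own argument: the paper's proof simply states that the corollary follows from Lemma \ref{duble-Leibniz} and Eq.~\eqref{Rota-Baxter}, which is exactly the two-step chain (substitute the formula for $[\cdot,\cdot]_T$, then recognize the Rota-Baxter identity) that you spell out explicitly.
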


\begin{proof}
It follows from Lemma \ref{duble-Leibniz} and \eqref{Rota-Baxter}.
\end{proof}

\begin{thm}
Let $T$ be a  relative Rota-Baxter operator on a Leibniz
algebra  $(\g,[\cdot,\cdot]_\g)$ with respect to $(V;\rho^L,\rho^R)$. Define
\begin{eqnarray}\label{rep-Leibniz}
\bar{\rho}^L(u)x=[Tu,x]_\g-T\rho^R(x)u,\quad \bar{\rho}^R(u)x=[x,Tu]_\g-T\rho^L(x)u,\quad\forall u\in V,x\in\g.
\end{eqnarray}
Then $(\g;\bar{\rho}^L,\bar{\rho}^R)$ is a representation of the Leibniz algebra $(V,[\cdot,\cdot]_{T})$.
\end{thm}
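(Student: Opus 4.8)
The plan is to avoid a term-by-term verification of the axioms \eqref{rep-1}--\eqref{rep-3} for $(\g;\bar{\rho}^L,\bar{\rho}^R)$ by recognizing $\bar{\rho}^L$ and $\bar{\rho}^R$ as the left and right actions of a subalgebra on a complement inside a semidirect product. First I would form the semidirect product Leibniz algebra $\g\ltimes V$ on $\g\oplus V$ with bracket
\begin{eqnarray*}
[(x,u),(y,v)]_\ltimes=([x,y]_\g,\ \rho^L(x)v+\rho^R(y)u),\qquad x,y\in\g,\ u,v\in V;
\end{eqnarray*}
that this is a Leibniz algebra is a routine consequence of \eqref{rep-1}--\eqref{rep-3} (matching the $V$-components uses \eqref{rep-1} on the $w$-terms, \eqref{rep-2} on the $v$-terms, and \eqref{rep-3} on the $u$-terms). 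The defining identity \eqref{Rota-Baxter} is exactly the statement that the graph $\mathfrak{G}_T=\{(Tu,u)\mid u\in V\}$ is closed under $[\cdot,\cdot]_\ltimes$, since $[(Tu,u),(Tv,v)]_\ltimes=(T[u,v]_T,[u,v]_T)$; hence $u\mapsto(Tu,u)$ is an isomorphism from $(V,[\cdot,\cdot]_T)$ onto the subalgebra $\mathfrak{G}_T$. Moreover $\g\oplus 0$ is a vector-space complement to $\mathfrak{G}_T$, so the projection along $\mathfrak{G}_T$ identifies $\g$ with the quotient $(\g\ltimes V)/\mathfrak{G}_T$.

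Next I would prove the general fact that any subalgebra $\mathfrak{h}$ of a Leibniz algebra $\mathfrak{L}$ represents on $\mathfrak{L}/\mathfrak{h}$ through the induced left and right multiplications $\overline{[h,-]}$ and $\overline{[-,h]}$. Each of the three axioms for this induced pair is a direct rearrangement of the Leibniz identity \eqref{Leibniz} in $\mathfrak{L}$ together with the symmetrization identity $[[x,z]+[z,x],y]=0$ (which is \eqref{rep-3} applied to the regular representation); in particular well-definedness on the quotient uses only $[\mathfrak{h},\mathfrak{h}]\subseteq\mathfrak{h}$. Applying this with $\mathfrak{L}=\g\ltimes V$ and $\mathfrak{h}=\mathfrak{G}_T$ immediately yields a representation of $(V,[\cdot,\cdot]_T)$ on $(\g\ltimes V)/\mathfrak{G}_T$.

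Finally I would transport this representation to $\g$ via the complement $\g\oplus 0$ and check that it agrees with $(\bar{\rho}^L,\bar{\rho}^R)$. For $u\in V$ and $x\in\g$ one computes $[(Tu,u),(x,0)]_\ltimes=([Tu,x]_\g,\rho^R(x)u)$; subtracting its $\mathfrak{G}_T$-component $(T\rho^R(x)u,\rho^R(x)u)$ leaves the $\g\oplus 0$-representative $[Tu,x]_\g-T\rho^R(x)u=\bar{\rho}^L(u)x$, and symmetrically $[x,Tu]_\g-T\rho^L(x)u=\bar{\rho}^R(u)x$, matching \eqref{rep-Leibniz}. This identifies the transported actions with $\bar{\rho}^L,\bar{\rho}^R$ and finishes the proof. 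The main obstacle is really the general quotient-representation lemma: once it is set up, the rest is bookkeeping. If one prefers to avoid the ambient construction, the same statement can be proved by expanding \eqref{rep-1}--\eqref{rep-3} for $\bar{\rho}^L,\bar{\rho}^R$ directly; there the difficulty is organizing the many $T(\cdots)$ terms, all of which are absorbed using \eqref{Rota-Baxter} and the Leibniz identity, exactly as in the proof of Lemma \ref{duble-Leibniz}.
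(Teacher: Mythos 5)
Your proposal is correct, but it takes a genuinely different route from the paper. The paper proves the theorem by direct verification: it expands the axioms \eqref{rep-1}, \eqref{rep-2}, \eqref{rep-3} for $(\bar{\rho}^L,\bar{\rho}^R)$ through the defining formulas \eqref{rep-Leibniz} and absorbs the accumulated $T(\cdots)$ terms using \eqref{Rota-Baxter}, the Leibniz identity \eqref{Leibniz} and the axioms for $(\rho^L,\rho^R)$ --- three long computations of exactly the kind your closing sentence describes. Your argument replaces these computations with structure: the semidirect product $\g\ltimes V$, the observation that \eqref{Rota-Baxter} is precisely closure of the graph $\frkG_T=\{(Tu,u)\mid u\in V\}$ under the semidirect bracket, a general lemma that any subalgebra $\frkh$ of a Leibniz algebra $\frkL$ represents on the vector-space quotient $\frkL/\frkh$ via the induced left and right multiplications, and transport of that representation to the complement $\g\oplus 0$. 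I checked the two places where this could break and both are sound: well-definedness of the quotient actions needs only $[\frkh,\frkh]\subseteq\frkh$, and the three module axioms on the quotient follow from \eqref{Leibniz} together with the identity $[[x,z]_\g+[z,x]_\g,y]_\g=0$, which holds in every Leibniz algebra (two applications of \eqref{Leibniz}), exactly as you indicate; moreover your transported actions do come out as $[Tu,x]_\g-T\rho^R(x)u$ and $[x,Tu]_\g-T\rho^L(x)u$, matching \eqref{rep-Leibniz}. What your route buys: it explains where the formulas \eqref{rep-Leibniz} come from (they are the natural $\frkG_T$-module structure on $(\g\ltimes V)/\frkG_T$ read off in the complement $\g$), it yields Lemma \ref{duble-Leibniz} for free (a subalgebra of a Leibniz algebra is a Leibniz algebra), and the quotient lemma is reusable elsewhere. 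What the paper's route buys: it is entirely self-contained --- the paper never constructs the semidirect product --- and each axiom is checked by a mechanical, if lengthy, calculation. One small correction: in your semidirect-product check, matching the $u$-terms needs \eqref{rep-2} as well as \eqref{rep-3} (first rewrite $\rho^R([y,z]_\g)$ by \eqref{rep-2}, then cancel $\rho^R(z)\rho^L(y)+\rho^R(z)\rho^R(y)$ by \eqref{rep-3}); this does not affect the correctness of the argument.
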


\begin{proof}
Indeed, for all $u,v\in V$ and $x\in\g$ , we have
\begin{eqnarray*}
&&(\bar{\rho}^L([u,v]_T)-[\bar{\rho}^L(u),\bar{\rho}^L(v)])x\\
&\stackrel{\eqref{thm:rota-baxter-to-leibniz},\eqref{rep-Leibniz}}{=}&[T[u,v]_T,x]_\g-T\rho^R(x)[u,v]_T-\bar{\rho}^L(u)([Tv,x]_\g-T\rho^R(x)v)+\bar{\rho}^L(v)([Tu,x]_\g-T\rho^R(x)u)\\
&\stackrel{\eqref{Rota-Baxter}}{=}&[[Tu,Tv]_\g,x]_\g-T\rho^R(x)(\rho^L(Tu)v+\rho^R(Tv)u)-[Tu,[Tv,x]_\g-T\rho^R(x)v]_\g\\
&&+T\rho^R([Tv,x]_\g-T\rho^R(x)v)u+[Tv,[Tu,x]_\g-T\rho^R(x)u]_\g-T\rho^R([Tu,x]_\g-T\rho^R(x)u)v\\
&=&-T\rho^R(x)\rho^L(Tu)v-T\rho^R(x)\rho^R(Tv)u+[Tu,T\rho^R(x)v]_\g+T\rho^R([Tv,x]_\g)u\\
&&-T\rho^R(T\rho^R(x)v)u-[Tv,T\rho^R(x)u]_\g-T\rho^R([Tu,x]_\g)v+T\rho^R(T\rho^R(x)u)v\\
&\stackrel{\eqref{rep-2}}{=}&-T\rho^R(x)\rho^L(Tu)v-T\rho^R(x)\rho^R(Tv)u+[Tu,T\rho^R(x)v]_\g+T[\rho^L(Tv),\rho^R(x)]u\\
&&-T\rho^R(T\rho^R(x)v)u-[Tv,T\rho^R(x)u]_\g-T[\rho^L(Tu),\rho^R(x)]v+T\rho^R(T\rho^R(x)u)v\\
&=&-T\rho^R(x)\rho^L(Tu)v-T\rho^R(x)\rho^R(Tv)u+[Tu,T\rho^R(x)v]_\g+T\rho^L(Tv)\rho^R(x)u-T\rho^R(x)\rho^L(Tv)u\\
&&-T\rho^R(T\rho^R(x)v)u-[Tv,T\rho^R(x)u]_\g-T\rho^L(Tu)\rho^R(x)v+T\rho^R(x)\rho^L(Tu)v+T\rho^R(T\rho^R(x)u)v\\
&\stackrel{\eqref{Rota-Baxter}}{=}&0.
\end{eqnarray*}
Thus we deduce that $\bar{\rho}^L([u,v]_T)=[\bar{\rho}^L(u),\bar{\rho}^L(v)]$. Furthermore, for all $u,v\in V$ and $x\in\g$, we have
\begin{eqnarray*}
&&(\bar{\rho}^R([u,v]_T)-[\bar{\rho}^L(u),\bar{\rho}^R(v)])x\\
&\stackrel{\eqref{thm:rota-baxter-to-leibniz},\eqref{rep-Leibniz}}{=}&[x,T[u,v]_T]_\g-T\rho^L(x)[u,v]_T-\bar{\rho}^L(u)([x,Tv]_\g-T\rho^L(x)v)+\bar{\rho}^R(v)([Tu,x]_\g-T\rho^R(x)u)\\
&=&[x,[Tu,Tv]_\g]_\g-T\rho^L(x)(\rho^L(Tu)v+\rho^R(Tv)u)-[Tu,[x,Tv]_\g-T\rho^L(x)v]_\g\\
&&+T\rho^R([x,Tv]_\g-T\rho^L(x)v)u+[[Tu,x]_\g-T\rho^R(x)u,Tv]_\g-T\rho^L([Tu,x]_\g-T\rho^R(x)u)v\\
&=&-T\rho^L(x)\rho^L(Tu)v-T\rho^L(x)\rho^R(Tv)u+[Tu,T\rho^L(x)v]_\g+T\rho^R([x,Tv]_\g)u\\
&&-T\rho^R(T\rho^L(x)v)u-[T\rho^R(x)u,Tv]_\g-T\rho^L([Tu,x]_\g)v+T\rho^L(T\rho^R(x)u)v\\
&\stackrel{\eqref{rep-1},\eqref{rep-2}}{=}&-T\rho^L(x)\rho^L(Tu)v-T\rho^L(x)\rho^R(Tv)u+[Tu,T\rho^L(x)v]_\g+T[\rho^L(x),\rho^R(Tv)]u\\
&&-T\rho^R(T\rho^L(x)v)u-[T\rho^R(x)u,Tv]_\g-T[\rho^L(Tu),\rho^L(x)]v+T\rho^L(T\rho^R(x)u)v\\
&\stackrel{\eqref{rep-3}}{=}&[Tu,T\rho^L(x)v]_\g-T\rho^L(Tu)\rho^L(x)v-T\rho^R(T\rho^L(x)v)u\\
&&-[T\rho^R(x)u,Tv]_\g+T\rho^L(T\rho^R(x)u)v+T\rho^R(Tv)\rho^R(x)u\\
&\stackrel{\eqref{Rota-Baxter}}{=}&0,
\end{eqnarray*}
which implies  that $\bar{\rho}^R([u,v]_T)=[\bar{\rho}^L(u),\bar{\rho}^R(v)]$. Finally, for all $u,v\in V$ and $x\in\g$, we have
\begin{eqnarray*}
&&(\bar{\rho}^R(u)\bar{\rho}^L(v)+\bar{\rho}^R(u)\bar{\rho}^R(v))x\\
&\stackrel{\eqref{rep-Leibniz}}{=}&\bar{\rho}^R(u)([Tv,x]_\g-T\rho^R(x)v)+\bar{\rho}^R(u)([x,Tv]_\g-T\rho^L(x)v)\\
&=&[[Tv,x]_\g-T\rho^R(x)v,Tu]_\g-T\rho^L([Tv,x]_\g-T\rho^R(x)v)u\\
&&+[[x,Tv]_\g-T\rho^L(x)v,Tu]_\g-T\rho^L([x,Tv]_\g-T\rho^L(x)v)u\\
&=&[[Tv,x]_\g,Tu]_\g-[T\rho^R(x)v,Tu]_\g-T\rho^L([Tv,x]_\g)u+T\rho^L(T\rho^R(x)v)u\\
&&+[[x,Tv]_\g,Tu]_\g-[T\rho^L(x)v,Tu]_\g-T\rho^L([x,Tv]_\g)u+T\rho^L(T\rho^L(x)v)u\\
&\stackrel{\eqref{Leibniz},\eqref{rep-1}}{=}&-[T\rho^R(x)v,Tu]_\g-T[\rho^L(Tv),\rho^L(x)]u+T\rho^L(T\rho^R(x)v)u\\
&&-[T\rho^L(x)v,Tu]_\g-T[\rho^L(x),\rho^L(Tv)]u+T\rho^L(T\rho^L(x)v)u\\
&\stackrel{\eqref{Rota-Baxter},\eqref{rep-3}}{=}&0.
\end{eqnarray*}
Therefore,   $(\g;\bar{\rho}^L,\bar{\rho}^R)$ is a representation of the Leibniz algebra $(V,[\cdot,\cdot]_{T})$.
\end{proof}

Let $\partial_T: C^{n}(V,\g)\longrightarrow  C^{n+1}(V,\g)$ be the corresponding Loday-Pirashvili coboundary operator of the Leibniz algebra $(V,[\cdot,\cdot]_{T})$ with  coefficients in the representation $(\g;\bar{\rho}^L,\bar{\rho}^R)$. More precisely, $\partial_T: C^{n}(V,\g)\longrightarrow  C^{n+1}(V,\g)$ is given by
               \begin{eqnarray*}
                && (\partial_T f)(v_1,\cdots,v_{n+1})\\&=&\sum_{i=1}^{n}(-1)^{i+1}[Tv_i,f(v_1,\cdots,\hat{v}_i,\cdots, v_{n+1})]_\g-\sum_{i=1}^{n}(-1)^{i+1}T\rho^R(f(v_1,\cdots,\hat{v}_i,\cdots, v_{n+1}))v_i\\
                &&+(-1)^{n+1}[f(v_1,\cdots,v_{n}),Tv_{n+1}]_\g+(-1)^nT\rho^L(f(v_1,\cdots,v_{n}))v_{n+1}\\
                &&+\sum_{1\le i<j\le n+1}(-1)^{i}f(v_1,\cdots,\hat{v}_i,\cdots,v_{j-1},\rho^L(Tv_i)v_j+\rho^R(Tv_j)v_i,v_{j+1},\cdots, v_{n+1}).
               \end{eqnarray*}

\begin{defi}
Let $T$ be a relative Rota-Baxter operator on a Leibniz algebra $(\g,[\cdot,\cdot]_\g)$ with respect to a representation $(V;\rho^L,\rho^R)$. Denote by $(C^*(V,\g)=\oplus _{k=0}^{+\infty}C^k(V,\g),\partial_T)$ the above cochain complex.  Denote the set of $k$-cocycles by $\huaZ^k(V,\g)$ and the set of $k$-coboundaries by $\huaB^k(V,\g)$. Denote by
  \begin{equation}
  \huaH^k(V,\g)=\huaZ^k(V,\g)/\huaB^k(V,\g), \quad k \geq 0,
  \label{eq:ocoh}
  \end{equation}
the $k$-th cohomology group which will be taken to be the {\bf $k$-th cohomology group for the relative Rota-Baxter operator $T$}.
\label{de:opcoh}
\end{defi}

It is obvious that $x\in\g$ is closed if and only if
         $
           T\circ\rho^L(x)-L_x\circ T=0,
         $
               and   $f\in C^1(V,\g)$  is closed  if and only if
               $$
              [Tu,f(v)]_\g+[f(u),Tv]_\g-T(\rho^L(f(u))v+\rho^R(f(v))u)-f(\rho^L(Tu)v+\rho^R(Tv)u)=0.
               $$

Let $(V;\rho^L,\rho^R)$ be  a representation of a Leibniz algebra $(\g,[\cdot,\cdot]_\g)$. Consider the graded vector space
$$C^*(V,\g):=\oplus_{n\ge 1}C^n(V,\g)=\oplus_{n\geq 1}\Hom(\otimes^{n}V,\g).$$

\begin{thm}{\rm (\cite{ST})}\label{huaO-MC}
With the above notations,   $(C^*(V,\g)=\oplus_{k=1}^{+\infty}C^k(V,\g),\{\cdot,\cdot\})$ is a graded Lie algebra, where the graded Lie  bracket $\{\cdot,\cdot\}$ is given by
{\footnotesize
\begin{eqnarray}
&&\nonumber\{g_1,g_2\}(v_1,v_2,\cdots,v_{m+n})\\
\label{gla-rota-baxter-leibniz}&&=\sum_{k=1}^{m}\sum_{\sigma\in\mathbb S_{(k-1,n)}}(-1)^{(k-1)n+1}(-1)^{\sigma}g_1(v_{\sigma(1)},\cdots,v_{\sigma(k-1)},\rho^L(g_2(v_{\sigma(k)},\cdots,v_{\sigma(k+n-1)}))v_{k+n},v_{k+n+1},\cdots,v_{m+n})\\
\nonumber&&+\sum_{k=2}^{m+1}\sum_{\sigma\in\mathbb S_{(k-2,n,1)}\atop \sigma(k+n-2)=k+n-1}(-1)^{kn}(-1)^{\sigma}
g_1(v_{\sigma(1)},\cdots,v_{\sigma(k-2)},\rho^R(g_2(v_{\sigma(k-1)},\cdots,v_{\sigma(k+n-2)}))v_{\sigma(k+n-1)},v_{k+n},\cdots,v_{m+n})\\
\nonumber&&+\sum_{k=1}^{m}\sum_{\sigma\in\mathbb S_{(k-1,n-1)}}(-1)^{(k-1)n}(-1)^{\sigma}[g_2(v_{\sigma(k)},\cdots,v_{\sigma(k+n-2)},v_{k+n-1}),g_1(v_{\sigma(1)},\cdots,v_{\sigma(k-1)},v_{k+n},\cdots,v_{m+n})]_{\g}\\
\nonumber&&+\sum_{\sigma\in\mathbb S_{(m,n-1)}}(-1)^{mn+1}(-1)^{\sigma}[g_1(v_{\sigma(1)},\cdots,v_{\sigma(m)}),g_2(v_{\sigma(m+1)},\cdots,v_{\sigma(m+n-1)},v_{m+n})]_{\g}\\
\nonumber&&+\sum_{k=1}^{n}\sum_{\sigma\in\mathbb S_{(k-1,m)}}(-1)^{m(k+n-1)}(-1)^{\sigma}g_2(v_{\sigma(1)},\cdots,v_{\sigma(k-1)},\rho^L(g_1(v_{\sigma(k)},\cdots,v_{\sigma(k+m-1)}))v_{k+m},v_{k+m+1},\cdots,v_{m+n})\\
\nonumber&&+\sum_{k=1}^{n}\sum_{\sigma\in\mathbb S_{(k-1,m,1)}\atop\sigma(k+m-1)=k+m}(-1)^{m(k+n-1)+1}(-1)^{\sigma}
g_2(v_{\sigma(1)},\cdots,v_{\sigma(k-1)},\rho^R(g_1(v_{\sigma(k)},\cdots,v_{\sigma(k-1+m)}))v_{\sigma(k+m)},v_{k+m+1},\cdots,v_{m+n}),
\end{eqnarray}
}
for all $g_1\in C^m(V,\g),~g_2\in C^n(V,\g)$.

Moreover, its Maurer-Cartan elements are precisely relative Rota-Baxter operators on the Leibniz algebra $(\g,[\cdot,\cdot]_\g)$ with respect to the representation $(V;\rho^L,\rho^R)$.
\end{thm}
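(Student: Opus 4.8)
The plan is to realize the bracket \eqref{gla-rota-baxter-leibniz} as a \emph{derived bracket} coming from the graded Lie algebra that controls Leibniz algebra structures on the semidirect product $\g\oplus V$, rather than to verify the graded Jacobi identity by hand. Recall (Balavoine) that for any vector space $W$ the space $\mathfrak{C}(W)=\oplus_{n\ge 1}\Hom(\otimes^n W,W)$, with $\Hom(\otimes^n W,W)$ placed in degree $n-1$, carries a graded Lie bracket $[\cdot,\cdot]_{\mathsf B}$ --- assembled from exactly the shuffle-insertion combinatorics visible in \eqref{gla-rota-baxter-leibniz} --- whose degree-$1$ Maurer--Cartan elements $\pi\colon\otimes^2 W\to W$ (those with $[\pi,\pi]_{\mathsf B}=0$) are precisely the Leibniz algebra structures on $W$. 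I would take this as the known input.

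First I would apply this with $W=\g\oplus V$. The axioms \eqref{rep-1}--\eqref{rep-3} are exactly the conditions making
\[
\pi\big((x,u),(y,v)\big)=\big([x,y]_\g,\ \rho^L(x)v+\rho^R(y)u\big)
\]
a Leibniz bracket on $\g\oplus V$ (the semidirect product), so $\pi$ is a Maurer--Cartan element of $\mathfrak{C}(\g\oplus V)$. Write $\pi=\mu+\rho$, where $\mu$ is the bracket of $\g$ and $\rho$ collects the two actions. Next I would identify $C^*(V,\g)=\oplus_{n\ge1}\Hom(\otimes^n V,\g)$ with the subspace $\mathfrak a\subseteq\mathfrak C(\g\oplus V)$ of operations that read only the $V$-components of their arguments and take values in the $\g$-summand, and check that $\mathfrak a$ is an \emph{abelian} subalgebra: any insertion in $[g_1,g_2]_{\mathsf B}$ would force a $\g$-valued output to be fed into a slot that reads only the $V$-component, hence vanishes.

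With the projection $P\colon\mathfrak C(\g\oplus V)\to\mathfrak a$ whose kernel is a subalgebra and the Maurer--Cartan element $\pi\in\ker P$, Voronov's higher-derived-bracket theorem equips $\mathfrak a$ with an $L_\infty$-structure $l_k(g_1,\dots,g_k)=P[\cdots[[\pi,g_1]_{\mathsf B},g_2]_{\mathsf B}\cdots,g_k]_{\mathsf B}$. The crucial simplification is a type count: bracketing $\pi$ with an element of $\mathfrak a$ produces only operations with a single $\g$-input and output in $\g$, or operations with pure $V$-inputs and output in $V$, so $P[\pi,g_1]_{\mathsf B}=0$ (whence $l_1=0$); bracketing either kind again with an element of $\mathfrak a$ lands back inside $\mathfrak a$, since every other insertion feeds a $\g$-valued output into a $V$-reading slot and vanishes. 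Thus $[[\pi,g_1]_{\mathsf B},g_2]_{\mathsf B}\in\mathfrak a$ already, and $l_k=0$ for $k\ge3$ because $\mathfrak a$ is abelian; only $l_2$ survives. Hence $(\mathfrak a,l_2)$ is a graded Lie algebra, now with $\Hom(\otimes^n V,\g)$ regraded into degree $n$ since the derived bracket raises the ambient degree by one, and expanding $l_2(g_1,g_2)=[[\mu+\rho,g_1]_{\mathsf B},g_2]_{\mathsf B}$ reproduces the six summands of \eqref{gla-rota-baxter-leibniz} term by term.

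Finally, a degree-$1$ element of $(\mathfrak a,l_2)$ is exactly a map $T\in\Hom(V,\g)=C^1(V,\g)$, and its Maurer--Cartan equation $\{T,T\}=l_2(T,T)=0$ unwinds to $[Tv_1,Tv_2]_\g=T(\rho^L(Tv_1)v_2+\rho^R(Tv_2)v_1)$, i.e.\ \eqref{Rota-Baxter}; equivalently $T$ is Maurer--Cartan iff its graph $\{(Tv,v):v\in V\}$ is a subalgebra of $(\g\oplus V,\pi)$, which is visibly the same condition. The conceptual steps are short, so the real work --- and the main obstacle --- is the bookkeeping in the last two steps: pinning down the sign and shuffle conventions in $[\cdot,\cdot]_{\mathsf B}$ so that the derived bracket matches \eqref{gla-rota-baxter-leibniz} on the nose (the signs $(-1)^{(k-1)n+1}$ and the constrained shuffle sets such as $\mathbb S_{(k-2,n,1)}$ with $\sigma(k+n-2)=k+n-1$ arise precisely from which of $\pi$'s slots each insertion hits, together with the Koszul signs). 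Should one prefer to avoid Voronov entirely, the alternative is to verify graded antisymmetry and the graded Jacobi identity for \eqref{gla-rota-baxter-leibniz} directly; then the same shuffle-and-sign bookkeeping becomes the entire difficulty.
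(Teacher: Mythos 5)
Your proposal is correct and is essentially the same argument as the paper's source: the paper gives no proof of this theorem but quotes it from \cite{ST}, and the proof there is precisely this derived-bracket construction --- Balavoine's graded Lie algebra on $\oplus_{n\geq 1}\Hom(\otimes^n(\g\oplus V),\g\oplus V)$ controlling Leibniz structures, the semidirect-product Maurer--Cartan element $\pi=\mu+\rho$, the abelian subalgebra $\oplus_{n\geq1}\Hom(\otimes^n V,\g)$, and Voronov's higher-derived-brackets theorem, with \eqref{gla-rota-baxter-leibniz} obtained by expanding $\{g_1,g_2\}=[[\pi,g_1],g_2]$. Your type-count argument (that $l_1=0$, that $[[\pi,g_1],g_2]$ already lies in the abelian subalgebra so no projection is needed, and that $l_k=0$ for $k\geq 3$) is sound, and the sign-and-shuffle bookkeeping you defer is exactly the computation carried out in \cite{ST}.
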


Let $T$ be a relative Rota-Baxter operator on the Leibniz algebra $(\g,[\cdot,\cdot]_\g)$ with respect to the representation $(V;\rho^L,\rho^R)$. Since $T$ is a Maurer-Cartan element of the graded Lie algebra $(C^*(V,\g),\{\cdot,\cdot\})$ given by Theorem~\ref{huaO-MC}, it follows from the graded Jacobi identity that the map
\begin{equation}\label{eq:huaO-dT}
d_{T}:C^n(V,\g)\longrightarrow C^{n+1}(V,\g), \quad d_{T}f=\{T,f\},
\end{equation}
 is a graded derivation of the graded Lie algebra $(C^*(V,\g),\{\cdot,\cdot\})$ satisfying $d_{T}\circ d_{T}=0$.

 Up to a sign, the coboundary operators $\partial_T$ coincides with the differential operator $d_{T}$ defined by ~\eqref{eq:huaO-dT} using  the Maurer-Cartan element $T$.

 \begin{thm}\label{partial-to-derivation}
 Let $T$ be a relative Rota-Baxter operator on the Leibniz algebra $(\g,[\cdot,\cdot]_\g)$ with respect to the representation $(V;\rho^L,\rho^R)$. Then we have
 $$
{\partial}_T f=(-1)^{n-1}d_Tf,\quad \forall f\in \Hom(\otimes^nV,\g),~n=1,2,\cdots.
 $$
\end{thm}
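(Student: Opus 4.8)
The plan is to verify the identity $\partial_T f = (-1)^{n-1} d_T f$ by computing both sides explicitly on an arbitrary $f \in \Hom(\otimes^n V, \g)$ and a tuple $(v_1, \ldots, v_{n+1})$, and matching terms one-to-one. Since $d_T f = \{T, f\}$ with $T \in C^1(V,\g)$ (so $m=1$ in the formula \eqref{gla-rota-baxter-leibniz} with $g_1 = T$, $g_2 = f$, $n$ arbitrary), the bracket formula simplifies dramatically: every sum over shuffles $\mathbb S_{(k-1,\cdots)}$ with $m=1$ collapses because $g_1 = T$ takes only one argument. First I would substitute $m=1$, $g_1 = T$, $g_2 = f$ into \eqref{gla-rota-baxter-leibniz} and reduce each of the six sums, tracking the sign $(-1)^{(k-1)n}$, $(-1)^{kn}$, $(-1)^{mn+1}$, etc. at $m=1$.

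The key observation driving the match is that the six lines of \eqref{gla-rota-baxter-leibniz} should organize, after setting $m=1$, into exactly the five types of terms appearing in $\partial_T f$: the two terms involving $[Tv_i, f(\cdots)]_\g$ and $[f(\cdots), Tv_{n+1}]_\g$ (the Leibniz bracket with $T$ applied to single arguments, coming from the third and fourth sums of \eqref{gla-rota-baxter-leibniz}); the two terms $T\rho^R(f(\cdots))v_i$ and $T\rho^L(f(\cdots))v_{n+1}$ (coming from the first and second sums, where $T = g_1$ receives a single modified slot); and the internal sum $\sum_{i<j}(-1)^i f(\ldots, \rho^L(Tv_i)v_j + \rho^R(Tv_j)v_i, \ldots)$ (coming from the fifth and sixth sums, where $f = g_2$ receives a slot acted on by $\rho^L(g_1) = \rho^L(T \cdot)$ or $\rho^R(T \cdot)$). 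I would present this correspondence as a short term-by-term dictionary rather than reproducing every shuffle expansion.

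The main obstacle will be the careful bookkeeping of signs and shuffle indices. In particular, the first and second sums of \eqref{gla-rota-baxter-leibniz} involve shuffles $\mathbb S_{(k-1,n)}$ and $\mathbb S_{(k-2,n,1)}$ with the constraint $\sigma(k+n-2) = k+n-1$, and at $m=1$ these must collapse to single terms indexed by which variable $v_i$ sits in the last ($T$-acted) slot; matching the resulting sign $(-1)^{(k-1)n+1}(-1)^\sigma$ against the prefactor $(-1)^{n-1}$ times $\partial_T$'s native sign $-(-1)^{i+1}$ on the $T\rho^R$ term requires checking that the shuffle sign $(-1)^\sigma$ contributes the correct power of $-1$ to reposition $v_i$ past $v_{i+1}, \ldots, v_n$. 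Likewise the fifth and sixth sums, which at $m=1$ become sums over $k$ with $g_1 = T$ single-valued, must be reindexed so that the pair $(i,j)$ with $i<j$ from $\partial_T$ matches the shuffle data. I expect that after fixing conventions the global factor $(-1)^{n-1}$ emerges uniformly across all term types; the safest route is to verify the sign on one representative term of each of the five types and invoke the shuffle-sign calculus for the rest. Since $d_T \circ d_T = 0$ is already guaranteed by the graded Jacobi identity and $\partial_T \circ \partial_T = 0$ follows from the representation property established in the preceding theorem, the identity $\partial_T = (-1)^{n-1} d_T$ is consistent with both being differentials, which provides a useful consistency check on the sign computation.
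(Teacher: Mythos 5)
Your proposal follows essentially the same route as the paper's proof: substitute $g_1=T$ (so $m=1$), $g_2=f$ into the bracket formula \eqref{gla-rota-baxter-leibniz}, collapse the shuffle sums to sums over single indices, and match the resulting five types of terms (and their signs) against the definition of $\partial_T f$, which is exactly the computation the paper carries out in full. Your term-by-term dictionary between the six sums of the bracket and the five families of terms in $\partial_T f$ is accurate, so completing your plan amounts to the same sign bookkeeping the paper performs.
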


\begin{proof}
For all $v_1,v_2,\cdots,v_{n+1}\in V$ and $f\in \Hom(\otimes^nV,\g)$, we have
\begin{eqnarray*}
&&(d_Tf)(v_1,v_2,\cdots,v_{n+1})\\&=&\{T,f\}(v_1,v_2,\cdots,v_{n+1})\\
                              &=&(-1)^1T(\rho^L(f(v_1,\cdots,v_n))v_{n+1})+\sum_{\sigma\in\mathbb S_{(0,n,1)}\atop\sigma(n)=n+1}(-1)^{\sigma}
                              T(\rho^R(v_{\sigma(1)},\cdots,v_{\sigma(n)})v_{\sigma(n+1)})\\
                              &&+[f(v_1,\cdots,v_n),Tv_{n+1}]_\g+\sum_{\sigma\in\mathbb S_{(1,n-1)}}(-1)^{n+1}(-1)^{\sigma}[Tv_{\sigma(1)},f(v_{\sigma(2)},\cdots,v_{\sigma(n)},v_{n+1})]_\g\\
                              &&+\sum_{k=1}^{n}\sum_{\sigma\in\mathbb S_{(k-1,1)}}(-1)^{k+n-1}(-1)^{\sigma}f(v_{\sigma(1)},\cdots,v_{\sigma(k-1)},\rho^L(Tv_{\sigma(k)})v_{k+1},v_{k+2},\cdots,v_{n+1})\\
                              &&+\sum_{k=1}^{n}\sum_{\sigma\in\mathbb S_{(k-1,1,1)}\atop\sigma(k)=k+1}(-1)^{k+n}(-1)^{\sigma}f(v_{\sigma(1)},\cdots,v_{\sigma(k-1)},\rho^R(Tv_{\sigma(k)})v_{\sigma(k+1)},v_{k+2},\cdots,v_{n+1})\\
                              &=&(-1)^1T(\rho^L(f(v_1,\cdots,v_n))v_{n+1})+\sum_{i=1}^{n}(-1)^{n+1-i}
                              T(\rho^R(v_{1},\cdots,\hat{v}_{i},\cdots,v_n,v_{n+1})v_{i})\\
                              &&+[f(v_1,\cdots,v_n),Tv_{n+1}]_\g+\sum_{i=1}^{n}(-1)^{n+i}[Tv_{i},f(v_{1},\cdots,\hat{v}_{i},\cdots,v_n,v_{n+1})]_\g\\
                              &&+\sum_{1\le i\le j\le n+1}(-1)^{n-1-i}f(v_{1},\cdots,\hat{v}_{i},\cdots,v_{j-1},\rho^L(Tv_{i})v_{j},v_{j+1},\cdots,v_{n+1})\\
                              &&+\sum_{1\le i\le j\le n+1}(-1)^{n-1-i}f(v_{1},\cdots,\hat{v}_{i},\cdots,v_{j-1},\rho^R(Tv_{j})v_{i},v_{j+1},\cdots,v_{n+1}).
\end{eqnarray*}
Thus, we obtain that $\partial_T f=(-1)^{n-1}d_Tf$. The proof is finished.
\end{proof}

We can use these cohomology groups to characterize linear and formal deformations of
relative Rota-Baxter operators in the following section.
\section{Deformations of a  relative Rota-Baxter operator}
\label{sec:def}
\subsection{Linear  deformations of a  relative Rota-Baxter operator}
In  this subsection, we study linear  deformations of a  relative Rota-Baxter operator using the cohomology  theory  given in the
previous section. In particular, we introduce the notion of a
Nijenhuis element associated to a relative Rota-Baxter operator, which gives
rise to a trivial linear deformation of the
relative Rota-Baxter operator.  

    \begin{defi}
      Let $T$ and $T'$ be relative Rota-Baxter operators on a Leibniz algebra $(\g,[\cdot,\cdot]_\g)$ with respect to a representation $(V;\rho^L,\rho^R)$. A {\bf homomorphism} from $T'$ to $T$ consists of a Leibniz algebra homomorphism  $\phi_\g:\g\longrightarrow\g$ and a linear map $\phi_V:V\longrightarrow V$ such that
      \begin{eqnarray}
        T\circ \phi_V&=&\phi_\g\circ T',\mlabel{defi:isocon1}\\
        \phi_V\rho^L(x)u&=&\rho^L(\phi_\g(x))\phi_V(u)\mlabel{defi:isocon2}\\
        \phi_V\rho^R(x)u&=&\rho^R(\phi_\g(x))\phi_V(u),\quad\forall x\in\g, u\in V.\mlabel{defi:isocon3}
      \end{eqnarray}
      In particular, if both $\phi_\g$ and $\phi_V$ are  invertible,  $(\phi_\g,\phi_V)$ is called an  {\bf isomorphism}  from $T'$ to $T$.
    \mlabel{defi:isoO}
    \end{defi}

\begin{pro}
Let $T$ and $T'$ be two relative Rota-Baxter operators on a Leibniz algebra $(\g,[\cdot,\cdot]_\g)$ with respect to a representation $(V;\rho^L,\rho^R)$ and $(\phi_\g,\phi_V)$   a homomorphism (resp. an isomorphism) from $T'$ to $T$. Then $\phi_V$ is a homomorphism (resp. an isomorphism) of Leibniz algebras from $(V,[\cdot,\cdot]_{T'})$ to $(V,[\cdot,\cdot]_{T})$. 
\end{pro}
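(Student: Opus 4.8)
The plan is to verify directly that $\phi_V$ intertwines the two Leibniz brackets, that is, that $\phi_V([u,v]_{T'})=[\phi_V(u),\phi_V(v)]_{T}$ holds for all $u,v\in V$; the isomorphism case then follows immediately, since an invertible intertwiner of Leibniz brackets is automatically an isomorphism of Leibniz algebras.

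First I would expand the left-hand side using the defining formula \eqref{thm:rota-baxter-to-leibniz} for $[\cdot,\cdot]_{T'}$ together with the linearity of $\phi_V$, obtaining
$$\phi_V([u,v]_{T'})=\phi_V\big(\rho^L(T'u)v\big)+\phi_V\big(\rho^R(T'v)u\big).$$
Next I would apply the equivariance conditions \eqref{defi:isocon2} and \eqref{defi:isocon3} with $x=T'u$ and $x=T'v$ respectively to push $\phi_V$ through the representation maps, turning this into $\rho^L(\phi_\g(T'u))\phi_V(v)+\rho^R(\phi_\g(T'v))\phi_V(u)$. Finally I would invoke the intertwining relation \eqref{defi:isocon1} to rewrite $\phi_\g(T'u)=T(\phi_V(u))$ and $\phi_\g(T'v)=T(\phi_V(v))$, which yields exactly $\rho^L(T\phi_V(u))\phi_V(v)+\rho^R(T\phi_V(v))\phi_V(u)$, and this equals $[\phi_V(u),\phi_V(v)]_{T}$ by \eqref{thm:rota-baxter-to-leibniz}.

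There is no real obstacle here: the argument is a three-step substitution that uses the two equivariance conditions once each and the intertwining condition twice, and it invokes none of the hypotheses beyond these compatibility relations; in particular, the fact that $\phi_\g$ is a Leibniz algebra homomorphism is not needed for this statement. The only point requiring any care is to apply the conditions in the correct order, so that each occurrence of $\phi_V$ is moved inside the representation map before $\phi_\g\circ T'$ is replaced by $T\circ\phi_V$.
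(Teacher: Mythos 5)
Your proposal is correct and is essentially identical to the paper's proof: the same three-step substitution using \eqref{thm:rota-baxter-to-leibniz}, the equivariance conditions \eqref{defi:isocon2}--\eqref{defi:isocon3}, and the intertwining relation \eqref{defi:isocon1}, in the same order. Your side remarks (that invertibility of $\phi_V$ immediately gives the isomorphism case, and that the hypothesis that $\phi_\g$ is a Leibniz algebra homomorphism is never used) are both accurate.
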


\begin{proof}
 This is because, for all $u,v\in V$, we
have
\begin{eqnarray*}
\phi_V([u,v]_{T'})=\phi_V(\rho^L(T'u)v+\rho^R(T'v)u)
&=&\rho^L(\phi_\g(T'u))\phi_V(v)+\rho^R(\phi_\g(T'v))\phi_V(u)\\
&=&\rho^L(T\phi_V(u))\phi_V(v)+\rho^R(T\phi_V(v))\phi_V(u)\\
&=&[\phi_V(u),\phi_V(v)]_T.
\end{eqnarray*}
\end{proof}

    \begin{defi}
    Let $T$   be a relative Rota-Baxter operator  on a Leibniz algebra $(\g,[\cdot,\cdot]_\g)$ with respect to a representation $(V;\rho^L,\rho^R)$ and $\frkT:V\longrightarrow\g$ a linear map. If  $T_t=T+t\frkT$ is still a relative Rota-Baxter operator  on the Leibniz algebra $\g$ with respect to the representation $(V;\rho^L,\rho^R)$ for all $t\in\K$, we say that $\frkT$ generates a {\bf linear deformation} of the relative Rota-Baxter operator $T$.
    \end{defi}
It is direct to check that $T_t=T+t\frkT$ is a linear deformation of a relative Rota-Baxter operator $T$ if and only if
 for any $u,v\in V$,
\vspace{-.1cm}
\begin{eqnarray}
~[Tu,\frkT v]_\g+[\frkT u,Tv]_\g&=&T(\rho^L(\frkT u)v+\rho^R(\frkT v)u)+\frkT(\rho^L(Tu)v+\rho^R(Tv)u),\mlabel{eq:deform1}\\
~[\frkT u,\frkT v]_\g&=&\frkT(\rho^L(\frkT u)v+\rho^R(\frkT v)u).
\mlabel{eq:deform2}
\vspace{-.1cm}
\end{eqnarray}
Note that Eq.~\eqref{eq:deform1} means that $\frkT$ is a 1-cocycle of the Leibniz algebra $(V,[\cdot,\cdot]_T)$ with coefficients in the representation $(\g;\bar{\rho}^L,\bar{\rho}^R)$ and Eq.~\eqref{eq:deform2} means that $\frkT$ is a relative Rota-Baxter operator on the Leibniz algebra $\g$ with respect to the representation $(V;\rho^L,\rho^R)$.

 Let $(V,[\cdot,\cdot])$ be a Leibniz algebra and $\omega:\otimes^2V\longrightarrow V$ be a linear map. If for any $t\in\K$, the multiplication $[\cdot,\cdot]_t$ defined by
\vspace{-.3cm}
$$
[u,v]_t=[u,v]+t\omega(u,v), \;\forall u,v\in V,
\vspace{-.2cm}
$$
also gives a Leibniz algebra structure, we say that $\omega$ generates a {\bf linear deformation} of the Leibniz algebra $(V,[\cdot,\cdot])$.

The two types of linear deformations are related as follows.

\begin{pro}
 If $\frkT$ generates a linear deformation of a relative Rota-Baxter operator $T$ on a Leibniz algebra $(\g,[\cdot,\cdot]_\g)$ with respect to a representation $(V;\rho^L,\rho^R)$, then the product $\omega_\frkT$ on $V$ defined by
\vspace{-.2cm}
   $$
   \omega_\frkT(u,v)=\rho^L(\frkT u)v+\rho^R(\frkT v)u,\quad\forall u,v\in V,
   $$
generates a linear deformation of the associated Leibniz algebra $(V,[\cdot,\cdot]_{T})$.
\end{pro}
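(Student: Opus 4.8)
The plan is to observe that the deformed product $[\cdot,\cdot]_T+t\omega_\frkT$ is nothing but the bracket on $V$ induced by the deformed operator $T_t=T+t\frkT$ through formula \eqref{thm:rota-baxter-to-leibniz}, and then to invoke Lemma \ref{duble-Leibniz}. First I would record the hypothesis: since $\frkT$ generates a linear deformation of $T$, the map $T_t=T+t\frkT$ is a relative Rota-Baxter operator on $(\g,[\cdot,\cdot]_\g)$ with respect to $(V;\rho^L,\rho^R)$ for every $t\in\K$.

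Next I would compute the bracket $[\cdot,\cdot]_{T_t}$ attached to $T_t$ by \eqref{thm:rota-baxter-to-leibniz}. Using the linearity of $\rho^L$ and $\rho^R$, for all $u,v\in V$ we have
\begin{eqnarray*}
[u,v]_{T_t}&=&\rho^L(T_tu)v+\rho^R(T_tv)u\\
&=&\rho^L(Tu)v+\rho^R(Tv)u+t\big(\rho^L(\frkT u)v+\rho^R(\frkT v)u\big)\\
&=&[u,v]_T+t\,\omega_\frkT(u,v).
\end{eqnarray*}
Hence the one-parameter family of products $[\cdot,\cdot]_T+t\omega_\frkT$ coincides identically with the family $[\cdot,\cdot]_{T_t}$.

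Finally, since $T_t$ is a relative Rota-Baxter operator for each $t\in\K$, Lemma \ref{duble-Leibniz} guarantees that $(V,[\cdot,\cdot]_{T_t})$ is a Leibniz algebra for every $t$. By the identification above, $[\cdot,\cdot]_T+t\omega_\frkT$ is therefore a Leibniz bracket for all $t$, which is exactly the assertion that $\omega_\frkT$ generates a linear deformation of $(V,[\cdot,\cdot]_T)$. I expect no genuine obstacle here; the argument is essentially immediate once the identification $[\cdot,\cdot]_{T_t}=[\cdot,\cdot]_T+t\omega_\frkT$ is made. The only point worth flagging is that the full strength of the deformation hypothesis is used, namely that $T_t$ is a relative Rota-Baxter operator for \emph{every} $t$ (equivalently, that both \eqref{eq:deform1} and \eqref{eq:deform2} hold), rather than merely the infinitesimal cocycle condition \eqref{eq:deform1}.
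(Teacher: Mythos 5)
Your proof is correct and is essentially identical to the paper's own argument: both identify the family $[\cdot,\cdot]_T+t\,\omega_\frkT$ with the bracket $[\cdot,\cdot]_{T_t}$ induced by the deformed operator $T_t=T+t\frkT$ via \eqref{thm:rota-baxter-to-leibniz}, and then conclude from Lemma \ref{duble-Leibniz} that each such bracket is a Leibniz bracket. Your version merely makes explicit the appeal to Lemma \ref{duble-Leibniz} that the paper leaves implicit.
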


\begin{proof}
We denote by $[\cdot,\cdot]_t$ the corresponding Leibniz algebra structure associated to the relative Rota-Baxter operator $T+t\frkT$. Then we have
\begin{eqnarray*}
[u,v]_t&=&\rho^L((T+t\frkT)u)v+\rho^R((T+t\frkT)v)u\\
&=&\rho^L(Tu)v+\rho^R(Tv)u+t(\rho^L(\frkT
u)v+\rho^R(\frkT v)u)\\
&=&[u,v]_T+t\omega_\frkT (u,v), \quad \forall u,v\in V,
\end{eqnarray*}
which implies that $\omega_\frkT$ generates a  linear deformation of $(V,[\cdot,\cdot]_{T})$.\end{proof}

\begin{defi} Let $T$ be a relative Rota-Baxter operator on a Leibniz algebra $(\g,[\cdot,\cdot]_\g)$
with respect to a representation $(V;\rho^L,\rho^R)$. Two
linear deformations $T^1_t=T+t\frkT_1$ and
$T^2_t=T+t\frkT_2$ are said to be {\bf equivalent} if there exists
an $x\in\g$ such that $({\Id}_\g+tL_x,{\Id}_V+t\rho^L(x))$ is a
homomorphism   from $T^2_t$ to $T^1_t$. In particular, a
 linear deformation $T_t=T+t\frkT$ of a
relative Rota-Baxter operator $T$ is said to be {\bf trivial} if there exists
an $x\in\g$ such that $({\Id}_\g+tL_x,{\Id}_V+t\rho^L(x))$ is a
homomorphism   from $T_t$ to $T$.
\end{defi}

Let $({\Id}_\g+tL_x,{\Id}_V+t\rho^L(x))$ be a homomorphism from
$T^2_t$ to $T^1_t$. Then ${\Id}_\g+tL_x$ is a Leibniz algebra
endomorphism of $(\g,[\cdot,\cdot]_\g)$. Thus, we have
$$
({\Id}_\g+tL_x)[y,z]_\g=[({\Id}_\g+tL_x)(y),({\Id}_\g+tL_x)(z)]_\g, \;\forall y,z\in \g,
$$
which implies that $x$ satisfies
\begin{equation}
 [[x,y]_\g,[x,z]_\g]_\g=0,\quad \forall y,z\in\g.
 \mlabel{eq:Nij1}
 \end{equation}
Then by Eq.~\eqref{defi:isocon1}, we get
$$
(T+t\frkT_1)({\Id}_V+t\rho^L(x))(u)=({\Id}_\g+tL_x)(T+t\frkT_2)(u),\quad\forall u\in V,
$$
which implies
\begin{eqnarray}
 (\frkT_2-\frkT_1)(u)&=&T\rho^L(x)u-[x,Tu]_\g,\mlabel{eq:deforiso1} \\
  \frkT_1\rho^L(x)(u)&=&[x,\frkT_2u]_\g, \; \forall u\in V.
  \mlabel{eq:deforiso2}
\end{eqnarray}
By Eq.~\eqref{defi:isocon2}, we obtain
$$
({\Id}_V+t\rho^L(x))\rho^L(y)(u)=\rho^L(({\Id}_\g+tL_x)(y))({\Id}_V+t\rho^L(x))(u),\quad \forall y\in\g, u\in V,
$$
which implies that $x$ satisfies
\begin{equation}
  \rho^L([x,y]_\g)\rho^L(x)=0,\quad\forall y\in\g.\mlabel{eq:Nij2}
\end{equation}
Finally, Eq.~\eqref{defi:isocon3} gives
$$
({\Id}_V+t\rho^L(x))\rho^R(y)(u)=\rho^R(({\Id}_\g+tL_x)(y))({\Id}_V+t\rho^L(x))(u),\quad \forall y\in\g, u\in V,
$$
which implies that $x$ satisfies
\begin{equation}
  \rho^R([x,y]_\g)\rho^L(x)=0,\quad\forall y\in\g.\mlabel{eq:Nij3}
\end{equation}
Note that Eq.~\eqref{eq:deforiso1} means that $\frkT_2-\frkT_1=\partial_T x$. Thus, we have

\begin{thm}\label{thm:iso3} Let $T$ be a  relative Rota-Baxter operator  on a Leibniz algebra $(\g,[\cdot,\cdot]_\g)$
with respect to a representation $(V;\rho^L,\rho^R)$.
  If two linear deformations $T^1_t=T+t\frkT_1$ and $T^2_t=T+t\frkT_2$ are equivalent, then $\frkT_1$ and $\frkT_2$ are in the same cohomology class of $ \huaH^1(V,\g)= \huaZ^1(V,\g)/ \huaB^1(V,\g)$ defined in
  Definition~\ref{de:opcoh}.
\end{thm}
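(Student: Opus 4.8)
The plan is to combine two observations, both of which are essentially already isolated in the discussion preceding the theorem. First I would note that since $T^1_t=T+t\frkT_1$ and $T^2_t=T+t\frkT_2$ are linear deformations of $T$, each of $\frkT_1$ and $\frkT_2$ satisfies the linear condition \eqref{eq:deform1}. As remarked just after \eqref{eq:deform2}, this condition is exactly the $1$-cocycle equation for the Loday--Pirashvili complex of $(V,[\cdot,\cdot]_T)$ with coefficients in $(\g;\bar\rho^L,\bar\rho^R)$; hence $\partial_T\frkT_1=0$ and $\partial_T\frkT_2=0$, so $\frkT_1,\frkT_2\in\huaZ^1(V,\g)$.

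Second, I would unpack the hypothesis of equivalence. By assumption there is an $x\in\g$ such that $({\Id}_\g+tL_x,{\Id}_V+t\rho^L(x))$ is a homomorphism from $T^2_t$ to $T^1_t$. Applying the compatibility \eqref{defi:isocon1} and comparing the coefficient of $t$ on both sides produces precisely \eqref{eq:deforiso1}, i.e. $(\frkT_2-\frkT_1)(u)=T\rho^L(x)u-[x,Tu]_\g$ for all $u\in V$. It remains to recognize the right-hand side as a coboundary: substituting $n=0$ and $f=x\in C^0(V,\g)=\g$ into the explicit formula for $\partial_T$, only the terms $(-1)^{n+1}[f,Tv_{n+1}]_\g$ and $(-1)^nT\rho^L(f)v_{n+1}$ survive, giving $(\partial_T x)(u)=T\rho^L(x)u-[x,Tu]_\g$. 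Therefore $\frkT_2-\frkT_1=\partial_T x\in\huaB^1(V,\g)$.

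Putting these together, $\frkT_1$ and $\frkT_2$ are $1$-cocycles differing by a $1$-coboundary, so they define the same class in $\huaH^1(V,\g)=\huaZ^1(V,\g)/\huaB^1(V,\g)$, which is the claim. The argument requires no genuine obstacle: the only verification of substance is the degree-zero coboundary identity $\partial_T x=T\rho^L(x)(\cdot)-[x,T(\cdot)]_\g$, which is the routine $n=0$ specialization noted above. In particular, the remaining relations forced by the homomorphism, namely \eqref{eq:deforiso2} together with the Nijenhuis-type constraints \eqref{eq:Nij1}, \eqref{eq:Nij2} and \eqref{eq:Nij3}, are not needed for the cohomological conclusion; they constrain which $x$ can yield an equivalence rather than affecting the class of $\frkT_2-\frkT_1$.
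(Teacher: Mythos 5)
Your proposal is correct and follows the same route as the paper: the paper's own argument consists of expanding the homomorphism condition \eqref{defi:isocon1} in powers of $t$ to obtain \eqref{eq:deforiso1}, observing that this says $\frkT_2-\frkT_1=\partial_T x$, and using the remark after \eqref{eq:deform2} that linear deformations give $1$-cocycles. Your explicit verification of the degree-zero coboundary formula $(\partial_T x)(u)=T\rho^L(x)u-[x,Tu]_\g$, and your observation that \eqref{eq:deforiso2} and \eqref{eq:Nij1}--\eqref{eq:Nij3} are not needed for the conclusion, are both accurate.
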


    \begin{defi}
Let $T$ be a  relative Rota-Baxter operator on a Leibniz algebra $(\g,[\cdot,\cdot]_\g)$ with respect to a representation $(V;\rho^L,\rho^R)$. An element $x\in\g$ is called a {\bf Nijenhuis element} associated to $T$ if $x$ satisfies Eqs.~\eqref{eq:Nij1}, \eqref{eq:Nij2}, \eqref{eq:Nij3} and the equation
      \begin{eqnarray}
        ~[x,T\rho^L(x)u-[x,Tu]_\g]_\g=0,\quad \forall u\in V.
         \mlabel{eq:Nijenhuis}
        \end{eqnarray}
   Denote by $\Nij(T)$ the set of Nijenhuis elements associated to a relative Rota-Baxter operator $T$.
    \end{defi}

By Eqs.~\eqref{eq:Nij1}-\eqref{eq:Nij3}, it is obvious that a
trivial linear deformation gives rise to a
Nijenhuis element. Conversely, a Nijenhuis element can also generate a trivial linear deformation as the following theorem shows.

  \begin{thm}\label{thm:trivial}
   Let $T$ be a relative Rota-Baxter operator on a Leibniz algebra $(\g,[\cdot,\cdot]_\g)$ with respect to a representation $(V;\rho^L,\rho^R)$. Then for any  $x\in \Nij(T)$, $T_t=T+t \frkT$ with $\frkT=\partial_T x$ is a trivial linear  deformation of the relative Rota-Baxter operator $T$.
\end{thm}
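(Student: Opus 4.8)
The plan is to unwind the definition of a trivial linear deformation into its two hidden requirements and verify each in turn: first that $T_t=T+t\frkT$ is genuinely a relative Rota-Baxter operator for every $t\in\K$, and second that the pair $(\Id_\g+tL_x,\Id_V+t\rho^L(x))$ is a homomorphism from $T_t$ to $T$ in the sense of Definition~\ref{defi:isoO}. I would begin by recording the explicit form of the infinitesimal: evaluating the coboundary operator on the $0$-cochain $x\in\g$ gives $\frkT u=(\partial_T x)(u)=T\rho^L(x)u-[x,Tu]_\g$, the expression already appearing in \eqref{eq:deforiso1} and in \eqref{eq:Nijenhuis}. The cocycle condition \eqref{eq:deform1} is then immediate: since $\frkT=\partial_T x$ and $\partial_T\circ\partial_T=0$ (the square-to-zero follows from Theorem~\ref{partial-to-derivation} together with $d_T\circ d_T=0$), we get $\partial_T\frkT=\partial_T^2 x=0$.

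Next I would expand each homomorphism axiom as a polynomial in $t$, writing $N=\Id_\g+tL_x$ and $n=\Id_V+t\rho^L(x)$. The Leibniz-homomorphism property of $N$ splits into a $t^1$-part that is exactly the Leibniz identity \eqref{Leibniz} and a $t^2$-part $[[x,y]_\g,[x,z]_\g]_\g=0$, namely \eqref{eq:Nij1}. The identity $T\circ n=N\circ T_t$, which is \eqref{defi:isocon1}, has a $t^1$-part that holds by the very definition of $\frkT$ and a $t^2$-part $[x,\frkT u]_\g=0$, namely \eqref{eq:Nijenhuis}. The intertwining identities \eqref{defi:isocon2} and \eqref{defi:isocon3} have $t^1$-parts coming from \eqref{rep-1} and \eqref{rep-2}, and $t^2$-parts $\rho^L([x,y]_\g)\rho^L(x)=0$ and $\rho^R([x,y]_\g)\rho^L(x)=0$, which are \eqref{eq:Nij2} and \eqref{eq:Nij3}. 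Thus all four homomorphism formulas hold identically in $t$, using precisely the four defining conditions of a Nijenhuis element.

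The main obstacle is then to show that $T_t$ really is a relative Rota-Baxter operator, that is, that \eqref{eq:deform2} holds; I would deduce this from the identities just established rather than by a brute-force expansion. For any $t$ with $N=\Id_\g+tL_x$ invertible, the relation $T\circ n=N\circ T_t$ gives $T_t=N^{-1}\circ T\circ n$, and since $N$ (hence $N^{-1}$) is a Leibniz-algebra homomorphism while $n$ intertwines $\rho^L,\rho^R$ with $N$, a short conjugation computation converts the relative Rota-Baxter identity \eqref{Rota-Baxter} for $T$ into the one for $T_t$. As $\det N$ is a polynomial in $t$ that is nonzero at $t=0$, this covers all but finitely many $t$; and because the relative Rota-Baxter defect of $T_t$ is itself polynomial (indeed quadratic) in $t$, it must vanish for all $t$. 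Hence $T_t$ is a linear deformation, and by the previous paragraph $(\Id_\g+tL_x,\Id_V+t\rho^L(x))$ is a homomorphism from $T_t$ to $T$, so the deformation is trivial.

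If one prefers an argument valid over an arbitrary field (avoiding the genericity step), the same conclusion \eqref{eq:deform2} can be obtained by a direct computation of $[\frkT u,\frkT v]_\g-\frkT(\rho^L(\frkT u)v+\rho^R(\frkT v)u)$: substituting $\frkT u=T\rho^L(x)u-[x,Tu]_\g$ and repeatedly applying \eqref{Leibniz}, the representation axioms \eqref{rep-1}--\eqref{rep-3}, the Rota-Baxter identity \eqref{Rota-Baxter}, and the Nijenhuis conditions \eqref{eq:Nij1}--\eqref{eq:Nijenhuis}. I expect this brute-force route to be the only genuinely laborious step; the homomorphism identities are merely bookkeeping of $t$-coefficients, and \eqref{eq:deform1} is formal, so the whole difficulty is concentrated in verifying that $\frkT$ is again a relative Rota-Baxter operator.
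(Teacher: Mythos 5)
Your proposal is correct and follows essentially the same route as the paper: the paper likewise derives the homomorphism identities for $(\Id_\g+tL_x,\Id_V+t\rho^L(x))$ from the Nijenhuis conditions, writes $T_t=(\Id_\g+tL_x)^{-1}\circ T\circ(\Id_V+t\rho^L(x))$ for invertible parameters (your ``short conjugation computation'' is exactly its Lemma~\ref{lem:isomorphism}), and then uses polynomiality of the conditions \eqref{eq:deform1}--\eqref{eq:deform2} to conclude that $T_t$ is a relative Rota-Baxter operator for all $t$, hence a trivial linear deformation. The only cosmetic differences are that you observe \eqref{eq:deform1} is automatic from $\partial_T\circ\partial_T=0$ and phrase the genericity step via $\det(\Id_\g+tL_x)$ rather than ``$t$ sufficiently small.''
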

We need the following lemma to prove this theorem.

\begin{lem}\label{lem:isomorphism}
Let $T$ be a relative Rota-Baxter operator on a Leibniz algebra $(\g,[\cdot,\cdot]_\g)$ with respect to a representation $(V;\rho^L,\rho^R)$.  Let $\phi_\g:\g\longrightarrow\g$ be a Leibniz algebra isomorphism and $\phi_V:V\longrightarrow V$ an isomorphism of vector spaces such that Eqs.~\eqref{defi:isocon2}-\eqref{defi:isocon3} hold. Then $\phi_\g^{-1}\circ{T}\circ\phi_V$ is a relative Rota-Baxter operator on the Leibniz algebra $(\g,[\cdot,\cdot]_\g)$ with respect to the representation $(V;\rho^L,\rho^R)$.
\end{lem}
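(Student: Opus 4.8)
The plan is to set $T'=\phi_\g^{-1}\circ T\circ\phi_V$ and verify directly that $T'$ satisfies the defining identity \eqref{Rota-Baxter} of a relative Rota-Baxter operator, namely $[T'v_1,T'v_2]_\g=T'(\rho^L(T'v_1)v_2+\rho^R(T'v_2)v_1)$ for all $v_1,v_2\in V$. The strategy has three moves: first push the bracket $[T'v_1,T'v_2]_\g$ through $\phi_\g^{-1}$ using that $\phi_\g$ is a Leibniz algebra isomorphism; then invoke the Rota-Baxter identity for $T$ itself; and finally transport the resulting argument back across $\phi_V$ using the intertwining equations \eqref{defi:isocon2}--\eqref{defi:isocon3}.

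For the first move, since $\phi_\g$ is an isomorphism of Leibniz algebras, so is its inverse $\phi_\g^{-1}$; hence $[\phi_\g^{-1}(a),\phi_\g^{-1}(b)]_\g=\phi_\g^{-1}[a,b]_\g$ for all $a,b\in\g$. Applying this with $a=T\phi_V v_1$ and $b=T\phi_V v_2$ gives $[T'v_1,T'v_2]_\g=\phi_\g^{-1}[T\phi_V v_1,T\phi_V v_2]_\g$. The second move applies \eqref{Rota-Baxter} to the operator $T$ evaluated on $\phi_V v_1,\phi_V v_2$, yielding $[T'v_1,T'v_2]_\g=\phi_\g^{-1}T\big(\rho^L(T\phi_V v_1)\phi_V v_2+\rho^R(T\phi_V v_2)\phi_V v_1\big)$.

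The heart of the argument is to recognize the argument of $T$ above as $\phi_V$ applied to the desired expression $\rho^L(T'v_1)v_2+\rho^R(T'v_2)v_1$. Here I would use \eqref{defi:isocon2} with $x=T'v_1$ and $u=v_2$, together with the identity $\phi_\g(T'v_1)=\phi_\g\phi_\g^{-1}T\phi_V v_1=T\phi_V v_1$, to obtain $\phi_V(\rho^L(T'v_1)v_2)=\rho^L(\phi_\g(T'v_1))\phi_V v_2=\rho^L(T\phi_V v_1)\phi_V v_2$; the analogous computation with \eqref{defi:isocon3} gives $\phi_V(\rho^R(T'v_2)v_1)=\rho^R(T\phi_V v_2)\phi_V v_1$. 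Adding these and substituting back shows $[T'v_1,T'v_2]_\g=\phi_\g^{-1}T\phi_V(\rho^L(T'v_1)v_2+\rho^R(T'v_2)v_1)=T'(\rho^L(T'v_1)v_2+\rho^R(T'v_2)v_1)$, which is exactly \eqref{Rota-Baxter} for $T'$.

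I do not anticipate a genuine obstacle, as the statement is a transport-of-structure verification; the only points requiring care are that $\phi_\g^{-1}$ inherits the Leibniz homomorphism property from the invertibility of $\phi_\g$, and that the intertwining relations must be applied at the correct arguments (with $x=T'v_i$) so that the factor $\phi_\g$ cancels the $\phi_\g^{-1}$ inside $T'$. I note in passing that condition \eqref{defi:isocon1} is not needed for this lemma, since $T'$ is defined so that $\phi_\g\circ T'=T\circ\phi_V$ holds automatically.
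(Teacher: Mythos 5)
Your proof is correct and is precisely the ``straightforward computation'' that the paper's proof omits: a direct verification of \eqref{Rota-Baxter} for $T'=\phi_\g^{-1}\circ T\circ\phi_V$, using that $\phi_\g^{-1}$ is again a Leibniz algebra isomorphism, the Rota-Baxter identity for $T$, and the intertwining relations \eqref{defi:isocon2}--\eqref{defi:isocon3} evaluated at $x=T'v_i$ so that $\phi_\g(T'v_i)=T\phi_V v_i$. Your observation that \eqref{defi:isocon1} is not needed is also accurate, since it holds for $T'$ by construction.
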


\begin{proof}
 It follows from
straightforward computations.
\end{proof}

{\bf The proof of Theorem \ref{thm:trivial}:}
For any Nijenhuis element $x\in\Nij({T})$, we define
 \begin{eqnarray}\label{trivial-genetator}
\frkT=\partial_T x.
 \end{eqnarray}
By the definition of Nijenhuis elements of ${T}$, for any $t$, $T_t=T+t \frkT$ satisfies
\begin{eqnarray*}
      ({\Id}_\g+tL_x)\circ T_t&=&T\circ({\Id}_V+t\rho^L(x)),\\
       ({\Id}_V+t\rho^L(x))\rho^L(y)u&=&\rho^L( ({\Id}_\g+tL_x)y)({\Id}_V+t\rho^L(x))(u)\\
        ({\Id}_V+t\rho^L(x))\rho^R(y)u&=&\rho^R( ({\Id}_\g+tL_x)y)({\Id}_V+t\rho^L(x))(u),\quad\forall y\in\g, u\in V
\end{eqnarray*}
 For $t$ sufficiently small, we see that ${\Id}_\g+tL_x$ is a Leibniz algebra isomorphism and ${\Id}_V+t\rho^L(x)$ is an isomorphism of vector spaces. Thus, we have
$$T_t=({\Id}_\g+tL_x)^{-1}\circ{T}\circ ({\Id}_V+t\rho^L(x)).$$
By Lemma \ref{lem:isomorphism},
we deduce that ${T}_t$ is a relative Rota-Baxter operator on the Leibniz algebra $(\g,[\cdot,\cdot]_\g)$ with respect to the representation $(V;\rho^L,\rho^R)$, for $t$ sufficiently small.
  Thus, $\frkT$
  given by
Eq.~\eqref{trivial-genetator} satisfies the conditions
\eqref{eq:deform1} and \eqref{eq:deform2}. Therefore,
${T}_t$ is a relative Rota-Baxter operator for all
$t$, which means that $\frkT$
given by
Eq.~\eqref{trivial-genetator} generates a linear deformation of $T$. It is straightforward to see  that
this linear deformation is trivial.\qed

\vspace{2mm}
Now we introduce the notion of a Nijenhuis operator on a Leibniz algebra, which gives rise to a trivial linear deformation of a Leibniz algebra.

\begin{defi}
  A linear map $N:\g\longrightarrow \g$ on a Leibniz algebra $(\g,[\cdot,\cdot]_\g) $ is called a {\bf Nijenhuis operator} if
  \begin{equation}
   [Nx,Ny]_\g=N([Nx,y]_\g+[x,Ny]_\g-N[x,y]_\g),\quad x,y\in\g.
  \end{equation}
\end{defi}

For its connection with a Nijenhuis element associated to
a relative Rota-Baxter operator, we have

\begin{pro}
Let $x\in\g$ be a Nijenhuis element associated to a
relative Rota-Baxter operator $T$ on a Leibniz algebra  $(\g,[\cdot,\cdot]_\g)$ with respect to a
representation $(V;\rho^L,\rho^R)$. Then $\rho^L(x)$ is a Nijenhuis operator
on the associated Leibniz algebra $(V,[\cdot,\cdot]_{T})$.
\end{pro}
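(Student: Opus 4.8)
The plan is to obtain the Nijenhuis identity for $N:=\rho^L(x)$ on $(V,[\cdot,\cdot]_{T})$ from the trivial linear deformation that $x$ generates, rather than checking it directly against Eqs.~\eqref{eq:Nij1}--\eqref{eq:Nijenhuis}. First I would apply Theorem~\ref{thm:trivial}: since $x\in\Nij(T)$, the map $\frkT=\partial_T x$ generates a trivial linear deformation $T_t=T+t\frkT$, and the pair $(\Id_\g+tL_x,\Id_V+t\rho^L(x))$ is a homomorphism from $T_t$ to $T$. Because $x$ satisfies Eqs.~\eqref{eq:Nij1}--\eqref{eq:Nij3}, the three homomorphism identities recorded in the proof of Theorem~\ref{thm:trivial} hold identically in $t$, so this is a homomorphism for every $t\in\K$.

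Next I would invoke the proposition stating that a homomorphism of relative Rota-Baxter operators descends to a homomorphism of the associated Leibniz algebras. Applied to the pair above, it shows that $\phi_V:=\Id_V+tN$ is a Leibniz algebra homomorphism from $(V,[\cdot,\cdot]_{T_t})$ to $(V,[\cdot,\cdot]_{T})$, that is,
\begin{equation}
(\Id_V+tN)\big([u,v]_{T_t}\big)=\big[(\Id_V+tN)u,\,(\Id_V+tN)v\big]_{T},\qquad\forall\,u,v\in V,\ t\in\K.
\label{eq:plan-hom}
\end{equation}
On the other hand, unwinding $[u,v]_{T_t}=\rho^L(T_tu)v+\rho^R(T_tv)u$ (equivalently, by the proposition on $\omega_\frkT$) gives $[u,v]_{T_t}=[u,v]_{T}+t\,\omega_\frkT(u,v)$ with $\omega_\frkT(u,v)=\rho^L(\frkT u)v+\rho^R(\frkT v)u$.

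The key step is to substitute this into \eqref{eq:plan-hom} and expand both sides as polynomials in $t$. Matching the coefficient of $t$ yields $\omega_\frkT(u,v)=[Nu,v]_{T}+[u,Nv]_{T}-N[u,v]_{T}$, and matching the coefficient of $t^2$ yields $N\,\omega_\frkT(u,v)=[Nu,Nv]_{T}$. Eliminating $\omega_\frkT$ between these two relations gives
\[
[Nu,Nv]_{T}=N\big([Nu,v]_{T}+[u,Nv]_{T}-N[u,v]_{T}\big),\qquad\forall\,u,v\in V,
\]
which is precisely the defining identity of a Nijenhuis operator for $N=\rho^L(x)$ on $(V,[\cdot,\cdot]_{T})$.

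The one technical point to justify is the passage from \eqref{eq:plan-hom}, valid for all scalars $t\in\K$, to the term-by-term comparison of coefficients; since $\K$ is infinite, a polynomial identity in $t$ holding for all $t$ forces equality of the coefficients, so the comparison is legitimate. I do not anticipate a genuine obstacle here. The only heavy alternative would be a brute-force verification expanding every occurrence of $[\cdot,\cdot]_{T}=\rho^L(T\cdot)\cdot+\rho^R(T\cdot)\cdot$ and invoking all four Nijenhuis conditions by hand; the deformation-theoretic route above avoids that computation entirely, and once Theorem~\ref{thm:trivial} and the two auxiliary propositions are in hand the argument reduces to organizing the $t$-expansion correctly.
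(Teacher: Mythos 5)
Your proposal is correct, but it takes a genuinely different route from the paper's. The paper proves the proposition by brute-force verification — precisely the computation you chose to avoid: it expands $\rho^L(x)\bigl([\rho^L(x)u,v]_T+[u,\rho^L(x)v]_T-\rho^L(x)[u,v]_T\bigr)-[\rho^L(x)u,\rho^L(x)v]_T$ via the definition \eqref{thm:rota-baxter-to-leibniz} of $[\cdot,\cdot]_T$, rewrites commutators using \eqref{rep-1}--\eqref{rep-2}, and kills the surviving terms with \eqref{eq:Nij2}, \eqref{eq:Nij3} and \eqref{eq:Nijenhuis}. Your route instead recycles Theorem~\ref{thm:trivial} and the (unlabelled) proposition following Definition~\ref{defi:isoO} on homomorphisms of relative Rota-Baxter operators descending to homomorphisms of the induced Leibniz algebras, and then reads off the Nijenhuis identity from the $t$- and $t^2$-coefficients; this is shorter, computation-free, and conceptually illuminating — it exhibits the Nijenhuis condition on $N=\rho^L(x)$ as exactly the statement that the induced linear deformation of $(V,[\cdot,\cdot]_T)$ is trivialized by $\Id_V+tN$. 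What the paper's computation buys in exchange is self-containedness and sharper bookkeeping: it shows \eqref{eq:Nij1} is never actually used for this proposition, whereas your argument formally requires it (it is what makes $\Id_\g+tL_x$ a Leibniz algebra morphism, as Definition~\ref{defi:isoO} demands), and it needs no coefficient-matching principle. Two small repairs to your write-up: the phrase ``because $x$ satisfies Eqs.~\eqref{eq:Nij1}--\eqref{eq:Nij3}'' is incomplete, since the intertwining identity $(\Id_\g+tL_x)\circ T_t=T\circ(\Id_V+t\rho^L(x))$ holds identically in $t$ precisely because of \eqref{eq:Nijenhuis}, whose content is the vanishing of its $t^2$-coefficient; and you can drop the appeal to infiniteness of $\K$ (which the paper never assumes) by noting that the three identities in the proof of Theorem~\ref{thm:trivial} hold coefficient-wise in $t$, hence so does your identity \eqref{eq:plan-hom}, making the comparison of coefficients automatic. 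Neither point is a genuine gap.
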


\begin{proof}
For all $u,v\in V$, we have
    \begin{eqnarray*}
    &&\rho^L(x)([\rho^L(x)u,v]_T+[u,\rho^L(x)v]_T-\rho^L(x)[u,v]_T)-[\rho^L(x)u,\rho^L(x)v]_T\\
&\stackrel{\eqref{thm:rota-baxter-to-leibniz}}{=}&\rho^L(x)\Big(\rho^L(T\rho^L(x)u)v+\rho^R(Tv)\rho^L(x)u+\rho^L(Tu)\rho^L(x)v+\rho^R(T\rho^L(x)v)u-\rho^L(x)(\rho^L(Tu)v+\rho^R(Tv)u)\Big)\\
&&-\rho^L(T\rho^L(x)u)\rho^L(x)v-\rho^R(T\rho^L(x)v)\rho^L(x)u\\
&=&[\rho^L(x),\rho^L(T\rho^L(x)u)]v+[\rho^L(x),\rho^R(T\rho^L(x)v)]u+\rho^L(x)([\rho^R(Tv),\rho^L(x)]u+[\rho^L(Tu),\rho^L(x)]v)\\
&\stackrel{\eqref{rep-1},\eqref{rep-2}}{=}&\rho^L([x,T\rho^L(x)u]_\g)v+\rho^R([x,T\rho^L(x)v]_\g)u-\rho^L(x)(\rho^R([x,Tv]_\g)u+\rho^L([x,Tu]_\g)v)\\
&\stackrel{\eqref{eq:Nij2},\eqref{eq:Nij3}}{=}&\rho^L([x,T\rho^L(x)u]_\g)v+\rho^R([x,T\rho^L(x)v]_\g)u-[\rho^L(x),\rho^R([x,Tv]_\g)]u-[\rho^L(x),\rho^L([x,Tu]_\g)]v\\
&\stackrel{\eqref{rep-1},\eqref{rep-2}}{=}&\rho^L([x,T\rho^L(x)u]_\g)v+\rho^R([x,T\rho^L(x)v]_\g)u-\rho^R([x,[x,Tv]_\g]_\g)u-\rho^L([x,[x,Tu]_\g]_\g)v\\
&=&\rho^L([x,T\rho^L(x)u-[x,Tu]_\g]_\g)v+\rho^R([x,T\rho^L(x)v-[x,Tv]_\g]_\g)u\\
&\stackrel{\eqref{eq:Nijenhuis}}{=}&0.
    \end{eqnarray*}
Thus, we deduce that $\rho^L(x)$ is a Nijenhuis operator
on the Leibniz algebra $(V,[\cdot,\cdot]_{T})$.
\end{proof}

\begin{rmk}
  When the Leibniz algebra $(\g,[\cdot,\cdot]_\g)$ is a Lie algebra  and $\rho^R=-\rho^L$, we recover the notion of a Nijenhuis element associated to an $\huaO$-operator on a Lie algebra with respect to a representation. See \cite{Tang-Bai-Guo-Sheng} for more details.
\end{rmk}
\subsection{Formal deformations of a relative Rota-Baxter operator}
\mlabel{sec:fordef}

Let $\K[[t]]$ be the ring of power series in one variable
$t$. For any $\K$-linear space $V$, we let
$V[[t]]$ denote the vector space of formal power series in $t$ with coefficients in $V$. If in addition, $(\g,[\cdot,\cdot]_\g)$ is a Leibniz algebra over $\K$, then there is a $\K[[t]]$-Leibniz algebra structure on
$\g[[t]]$ given by
\begin{equation}\label{formal-1}
\bigg[\sum_{i=0}^{+\infty} x_it^i,\sum_{j=0}^{+\infty}y_jt^j\bigg]_\g=\sum_{k=
0}^{+\infty}\sum_{i+j=k}[x_i,y_j]_\g t^k,\quad\forall  x_i,y_j\in \g.
\end{equation}
For any representation $(V;\rho^L,\rho^R)$ of $(\g,[\cdot,\cdot]_\g)$, there is a natural
representation of $\K[[t]]$-Leibniz algebra $\g[[t]]$ on $\K[[t]]$-module $V[[t]]$, which is given by
\begin{eqnarray}
\label{formal-2}\rho^L\bigg(\sum_{i=0}^{+\infty} x_it^i\bigg)\bigg(\sum_{j=0}^{+\infty} v_jt^j\bigg)&=&\sum_{k=
0}^{+\infty}\sum_{i+j=k}\rho^L(x_i)v_j t^k,\\
\label{formal-3}\rho^R\bigg(\sum_{i=0}^{+\infty} x_it^i\bigg)\bigg(\sum_{j=0}^{+\infty} v_jt^j\bigg)&=&\sum_{k=
0}^{+\infty}\sum_{i+j=k}\rho^R(x_i)v_j t^k\quad\forall  x_i\in \g,~v_j\in V.
\end{eqnarray}

Let $T$ be a relative Rota-Baxter operator $T$ on a Leibniz algebra  $(\g,[\cdot,\cdot]_\g)$ with respect to a
representation $(V;\rho^L,\rho^R)$.
Consider a power series
\begin{eqnarray}
T_t=\sum_{i=0}^{+\infty}\frkT_i t^i,\quad \frkT_i\in \Hom_{\K}(V,\g),
\label{eq:tdeform}
\end{eqnarray}
that is, $T_t\in{\rm Hom}_{\K}(V,\g)[[t]]={\rm
Hom}_{\K}(V,\g[[t]])$.  Extend it to be a $\K[[t]]$-module map from $V[[t]]$ to $\g[[t]]$ which is still denoted by $T_t$.

\begin{defi}\label{defi:dO}
 If  $T_t=\sum_{i=0}^{+\infty}\frkT_i t^i$ with $\frkT_0=T$ satisfies
\begin{eqnarray}
[T_t(u),T_t(v)]_\g=T_t\Big(\rho^L(T_t(u))v+\rho^R(T_t(v))u\Big),\;\;\forall
u,v\in V, \mlabel{O-operator}
\end{eqnarray}
we say that $T_t$ is a {\bf formal deformation} of
the relative Rota-Baxter operator $T$.
\end{defi}

Recall  that a formal deformation of a
Leibniz algebra $(\g,[\cdot,\cdot]_\g)$ is a power series
$\omega_t=\sum_{i=0}^{+\infty} \omega_i t^i$ such that $\omega_0(x,y)=[x,y]_\g$
 for any $x,y\in \g$  and $\omega_t$ defines a $\K[[t]]$-Leibniz algebra
structure on $\g[[t]]$.

Building on the relationship between relative Rota-Baxter operators and Leibniz algebras, we have
\begin{pro}
 If $T_t=\sum_{i=0}^{+\infty}\frkT_i t^i$ is a formal deformation of a relative Rota-Baxter operator $T$ on a Leibniz algebra $(\g,[\cdot,\cdot]_\g)$ with respect to a representation $(V;\rho^L,\rho^R)$, then $[\cdot,\cdot]_{T_t}$ defined by
   $$
   [u,v]_{T_t}=\sum_{i=0}^{+\infty}\Big(\rho^L(\frkT_iu)v+\rho^R(\frkT_iv)u\Big) t^i,\quad\forall u,v\in V,
   $$
is a formal deformation of the associated Leibniz algebra $(V,[\cdot,\cdot]_{T})$.
\end{pro}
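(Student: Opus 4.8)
The plan is to recognize the formal deformation $T_t$ not as a family of maps over $\K$, but as a \emph{single} relative Rota-Baxter operator over the ground ring $\K[[t]]$, and then to invoke Lemma~\ref{duble-Leibniz} essentially verbatim. Concretely, by \eqref{formal-1} the bracket $[\cdot,\cdot]_\g$ extends to a $\K[[t]]$-Leibniz algebra structure on $\g[[t]]$, and by \eqref{formal-2}--\eqref{formal-3} the maps $\rho^L,\rho^R$ extend to a representation of $\g[[t]]$ on the $\K[[t]]$-module $V[[t]]$. I would first note that these extensions still satisfy \eqref{Leibniz} and \eqref{rep-1}--\eqref{rep-3}, which is immediate since each axiom is multilinear and holds coefficientwise in $t$.

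Next I would observe that the defining equation \eqref{O-operator} of a formal deformation is exactly the relative Rota-Baxter identity \eqref{Rota-Baxter} for the $\K[[t]]$-linear map $T_t\colon V[[t]]\to\g[[t]]$ with respect to the extended representation. Although \eqref{O-operator} is stated only for $u,v\in V$, both sides are $\K[[t]]$-bilinear in $(u,v)$ — the bracket, $\rho^L$, $\rho^R$ and $T_t$ are all $\K[[t]]$-linear — so the identity propagates to all of $V[[t]]$. Hence $T_t$ is a genuine relative Rota-Baxter operator over $\K[[t]]$.

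With this established, the proof of Lemma~\ref{duble-Leibniz} uses only \eqref{Leibniz}, \eqref{rep-1}--\eqref{rep-3} and \eqref{Rota-Baxter}, and is therefore insensitive to the choice of ground ring; it yields that the operation $[u,v]_{T_t}=\rho^L(T_t u)v+\rho^R(T_t v)u$ endows $V[[t]]$ with a $\K[[t]]$-Leibniz algebra structure. Expanding $T_t=\sum_{i\ge 0}\frkT_i t^i$ via \eqref{formal-2}--\eqref{formal-3} gives precisely $[u,v]_{T_t}=\sum_{i\ge 0}\big(\rho^L(\frkT_i u)v+\rho^R(\frkT_i v)u\big)t^i$. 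Finally, since $\frkT_0=T$, the coefficient of $t^0$ is $\rho^L(Tu)v+\rho^R(Tv)u=[u,v]_T$ by \eqref{thm:rota-baxter-to-leibniz}, so $[\cdot,\cdot]_{T_t}$ reduces to the associated bracket at $t=0$ and is therefore a formal deformation of $(V,[\cdot,\cdot]_T)$.

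The only real content lies in the reduction carried out in the first two paragraphs; once $T_t$ is seen as a relative Rota-Baxter operator over $\K[[t]]$, the conclusion follows formally. The step most worth writing out carefully is the $\K[[t]]$-bilinear extension of \eqref{O-operator}, since that is exactly what licenses applying Lemma~\ref{duble-Leibniz} without rerunning its computation. The alternative — collecting the coefficient of each $t^n$ in the Leibniz identity for $[\cdot,\cdot]_{T_t}$ and matching it term by term against the sums in the proof of Lemma~\ref{duble-Leibniz} — is routine but notationally heavier, and I would avoid it.
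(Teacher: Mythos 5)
Your proof is correct, and it coincides with the argument the paper leaves implicit: the proposition is stated there without proof, as an immediate consequence of Lemma~\ref{duble-Leibniz} applied over the ground ring $\K[[t]]$ using the structures \eqref{formal-1}--\eqref{formal-3}, which is exactly your reduction. One refinement to the step you rightly single out as the crux: when $V$ is infinite-dimensional, $V[[t]]$ is strictly larger than the $\K[[t]]$-submodule spanned by $V$, so $\K[[t]]$-bilinearity alone propagates \eqref{O-operator} only to finite $\K[[t]]$-combinations of elements of $V$; to reach all of $V[[t]]$ you should also invoke $t$-adic continuity of $T_t$, $\rho^L$, $\rho^R$ and the bracket, which holds because each is defined coefficientwise.
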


Applying Eqs. \eqref{formal-1}-\eqref{eq:tdeform} to expand Eq.~\eqref{O-operator} and collecting coefficients of $t^n$, we see that Eq.~\eqref{O-operator} is equivalent to the system of equations
\begin{eqnarray}
\sum\limits_{i+j=k\atop
i,j\geq0}\Big([\frkT_iu,\frkT_jv]_\g-\frkT_i\big(\rho^L(\frkT_ju)v+\rho^R(\frkT_jv)u\big)\Big)=0,
\;\;\forall k\geq 0, u,v\in V. \mlabel{deformation-equation}
\end{eqnarray}

\begin{pro}
Let $T_t=\sum_{i=0}^{+\infty}\frkT_i t^i$ be a formal
deformation of a relative Rota-Baxter operator $T$ on a Leibniz algebra $(\g,[\cdot,\cdot]_\g)$ with respect to a representation $(V;\rho^L,\rho^R)$. Then $\frkT_1$ is a
$1$-cocycle for the relative Rota-Baxter operator $T$, that is, $\partial_T \frkT_1=0$.
\mlabel{pro:cocycle}
\end{pro}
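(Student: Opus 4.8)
The plan is to read off the infinitesimal condition directly from the governing deformation equation \eqref{deformation-equation}, which the hypothesis already supplies. Since $T_t=\sum_{i=0}^{+\infty}\frkT_i t^i$ with $\frkT_0=T$ is a formal deformation, Definition~\ref{defi:dO} gives Eq.~\eqref{O-operator}, and expanding it by \eqref{formal-1}--\eqref{eq:tdeform} and collecting the coefficient of each $t^k$ yields precisely the system \eqref{deformation-equation}. So I would not redo any expansion; I would simply extract the correct coefficient.

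First I would specialize \eqref{deformation-equation} to $k=1$. The only pairs $(i,j)$ with $i+j=1$ and $i,j\geq 0$ are $(0,1)$ and $(1,0)$, so the $k=1$ equation reads
\begin{eqnarray*}
&&[\frkT_0 u,\frkT_1 v]_\g+[\frkT_1 u,\frkT_0 v]_\g\\
&&-\frkT_0\big(\rho^L(\frkT_1 u)v+\rho^R(\frkT_1 v)u\big)-\frkT_1\big(\rho^L(\frkT_0 u)v+\rho^R(\frkT_0 v)u\big)=0,
\end{eqnarray*}
for all $u,v\in V$. Substituting $\frkT_0=T$ turns this into
\begin{eqnarray*}
[Tu,\frkT_1 v]_\g+[\frkT_1 u,Tv]_\g-T\big(\rho^L(\frkT_1 u)v+\rho^R(\frkT_1 v)u\big)-\frkT_1\big(\rho^L(Tu)v+\rho^R(Tv)u\big)=0.
\end{eqnarray*}

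The last step is a direct comparison: with $f=\frkT_1\in C^1(V,\g)$, the displayed identity is word-for-word the closedness condition for $1$-cochains recorded right after Definition~\ref{de:opcoh}. Hence $\frkT_1\in\huaZ^1(V,\g)$, i.e.\ $\partial_T\frkT_1=0$, which is the assertion. I expect no real obstacle here; the proof is a one-coefficient extraction, and the only care needed is to match the sign conventions and to recognize that the $k=1$ slice of the deformation equation coincides exactly with the cocycle condition for $T$. (If one prefers, the same conclusion follows from Theorem~\ref{partial-to-derivation} and the graded-Lie-algebra description of Theorem~\ref{huaO-MC}, since the $t$-linear part of the Maurer--Cartan equation for $T_t$ is $\{T,\frkT_1\}=0=\pm\,\partial_T\frkT_1$, but the coefficient comparison above is the most economical route.)
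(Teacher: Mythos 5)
Your proof is correct and follows exactly the paper's own argument: specialize Eq.~\eqref{deformation-equation} to $k=1$, substitute $\frkT_0=T$, and identify the resulting identity with the closedness condition for $1$-cochains stated after Definition~\ref{de:opcoh}. The extra detail you supply (listing the pairs $(0,1)$, $(1,0)$ and the parenthetical Maurer--Cartan alternative) is fine but not needed.
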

\begin{proof}
For $k=1$, Eq.~\eqref{deformation-equation} is equivalent to
  \vspace{-.1cm}
$$ [Tu,\frkT_1v]+[\frkT_1u,Tv]-T(\rho^L(\frkT_1u)v+\rho^R(\frkT_1v)u)-\frkT_1(\rho^L(Tu)v+\rho^R(Tv)u)=0,\;\;\forall u,v\in V.
  \vspace{-.1cm}
$$
Thus, $\frkT_1$ is a $1$-cocycle. The proof is finished.
\end{proof}

\begin{defi}
Let $T$ be a relative Rota-Baxter operator on a Leibniz algebra $(\g,[\cdot,\cdot]_\g)$ with respect to a representation $(V;\rho^L,\rho^R)$. The $1$-cocycle $\frkT_1$ given in
Proposition~\ref{pro:cocycle} is called the {\bf infinitesimal} of
the formal deformation $T_t=\sum_{i=0}^{+\infty}\frkT_i t^i$ of $T$.
  \vspace{-.1cm}
\end{defi}

\begin{defi}
 Two formal deformations $\overline{T}_t =\sum_{i=0}^{+\infty}\bar{\frkT}_i t^i$ and $T_t=\sum_{i=0}^{+\infty}\frkT_i
 t^i$ of a relative Rota-Baxter operator $T={\bar {\frkT}}_0=\frkT_0$
on a Leibniz algebra $(\g,[\cdot,\cdot]_\g)$ with respect to a representation $(V;\rho^L,\rho^R)$
are said to be {\bf equivalent} if there exist  $x\in\g$,
$\phi_i\in\gl(\g)$ and $\varphi_i\in\gl(V)$, $i\geq 2$, such that
for
\begin{equation}
\phi_t={\Id}_\g+tL_x+\sum_{i=2}^{+\infty}\phi_it^i,\;\;\varphi_t={\Id}_V+t\rho^L(x)+\sum_{i=2}^{+\infty}\varphi_it^i,
\label{eq:phi5}
\vspace{-.4cm}
\end{equation}
the following conditions hold:
\begin{enumerate}
\item[\rm(i)] $[\phi_t(x),\phi_t(y)]_\g=\phi_t[x,y]_\g$ for all
$x,y\in \g;$
\item[\rm(ii)]
$\varphi_t\rho^L(x)u=\rho^L(\phi_t(x))\varphi_t(u)$;
\item[\rm(iii)]
$\varphi_t\rho^R(x)u=\rho^R(\phi_t(x))\varphi_t(u)$ for all $x\in\g,
u\in V$;
\item[\rm(iv)]   $T_t\circ
\varphi_t=\phi_t\circ \overline{T}_t$ as $\K[[t]]$-module maps.
\end{enumerate}
In particular, a formal deformation $T_t$ of a relative Rota-Baxter operator
$T$ is said to be {\bf trivial} if there exists an $x\in\g$,
$\phi_i\in\gl(\g)$ and $\varphi_i\in\gl(V)$, $i\geq 2$, such that
$(\phi_t,\varphi_t)$ defined by Eq.~(\ref{eq:phi5}) gives an
equivalence  between $T_t$ and $T$, with the latter regarded as a
deformation of itself.
\end{defi}

\begin{thm}
If two formal deformations of a relative Rota-Baxter operator
on a Leibniz algebra $(\g,[\cdot,\cdot]_\g)$ with respect to a representation $(V;\rho^L,\rho^R)$ are equivalent, then their infinitesimals are in the
same cohomology class.
\end{thm}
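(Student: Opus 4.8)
The plan is to unwind the definition of equivalence of two formal deformations and extract, at order $t^1$, the relation between the two infinitesimals. Let $\overline{T}_t=\sum_{i\ge 0}\bar{\frkT}_i t^i$ and $T_t=\sum_{i\ge 0}\frkT_i t^i$ be equivalent formal deformations with $\bar{\frkT}_0=\frkT_0=T$, and let $(\phi_t,\varphi_t)$ be the equivalence given by Eq.~\eqref{eq:phi5}, so that in particular condition (iv), namely $T_t\circ\varphi_t=\phi_t\circ\overline{T}_t$, holds as $\K[[t]]$-module maps. First I would substitute the power series expansions of $T_t$, $\overline{T}_t$, $\phi_t$, and $\varphi_t$ into condition (iv) and compare the coefficients of $t^1$. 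Writing the two sides as
\begin{eqnarray*}
T_t\circ\varphi_t &=& (T+t\frkT_1+\cdots)\circ({\Id}_V+t\rho^L(x)+\cdots),\\
\phi_t\circ\overline{T}_t &=& ({\Id}_\g+tL_x+\cdots)\circ(T+t\bar{\frkT}_1+\cdots),
\end{eqnarray*}
the degree-zero term recovers $\frkT_0=\bar{\frkT}_0=T$, and the degree-one term yields the identity
$$
\frkT_1(u)+T\rho^L(x)u=\bar{\frkT}_1(u)+L_x(Tu)=\bar{\frkT}_1(u)+[x,Tu]_\g,\qquad\forall u\in V.
$$

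Rearranging this gives $\frkT_1(u)-\bar{\frkT}_1(u)=[x,Tu]_\g-T\rho^L(x)u$ for all $u\in V$. The key observation is that the right-hand side is exactly $-\big(T\rho^L(x)u-[x,Tu]_\g\big)$, which is the expression appearing in Eq.~\eqref{eq:deforiso1}; that is, $\frkT_1-\bar{\frkT}_1=-\partial_T x$ by the computation recorded just before Theorem~\ref{thm:iso3}, where it was noted that $\frkT_2-\frkT_1=\partial_T x$ precisely means $T\rho^L(x)u-[x,Tu]_\g=(\partial_T x)(u)$. Hence $\frkT_1-\bar{\frkT}_1$ lies in the image of $\partial_T$, i.e. it is a $1$-coboundary, so $\frkT_1$ and $\bar{\frkT}_1$ represent the same class in $\huaH^1(V,\g)=\huaZ^1(V,\g)/\huaB^1(V,\g)$.

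The only genuine content beyond the routine coefficient extraction is checking that both infinitesimals are actually cocycles (so that talking about a cohomology class is legitimate), but this is already supplied by Proposition~\ref{pro:cocycle}, which guarantees $\partial_T\frkT_1=0$ and $\partial_T\bar{\frkT}_1=0$. I would therefore cite Proposition~\ref{pro:cocycle} to place $\frkT_1,\bar{\frkT}_1\in\huaZ^1(V,\g)$ and then invoke the order-$t^1$ identity above to conclude they differ by $\partial_T x\in\huaB^1(V,\g)$. I do not expect any serious obstacle: the entire argument is a single coefficient comparison combined with the already-established identification of $\partial_T x$ with $x\mapsto[x,T(\cdot)]_\g-T\rho^L(x)(\cdot)$. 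The one point demanding a little care is bookkeeping the signs and the order of composition in condition (iv)—specifically making sure the $t^1$ coefficient of $\phi_t\circ\overline{T}_t$ contributes $L_x\circ T$ rather than $T\circ\rho^L(x)$—but the higher-order maps $\phi_i,\varphi_i$ for $i\ge 2$ do not enter at order $t^1$, so they may be ignored entirely for this statement.
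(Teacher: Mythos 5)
Your proof is correct and follows essentially the same route as the paper's: compare the coefficient of $t$ in condition (iv), $T_t\circ\varphi_t=\phi_t\circ\overline{T}_t$, to obtain $\bar{\frkT}_1-\frkT_1=\partial_T x\in\huaB^1(V,\g)$, hence the two infinitesimals define the same class in $\huaH^1(V,\g)$. Your extra step of citing Proposition~\ref{pro:cocycle} to place $\frkT_1,\bar{\frkT}_1\in\huaZ^1(V,\g)$ is left implicit in the paper but is the same argument.
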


\begin{proof} Let $(\phi_t,\varphi_t)$ be the two maps defined by
Eq.~(\ref{eq:phi5}) which gives an equivalence between two
deformations $\overline{T}_t=\sum_{i=0}^{+\infty}\overline{\frkT}_i t^i$ and
$T_t=\sum_{i=0}^{+\infty}\frkT_i t^i$ of a relative Rota-Baxter operator $T$. By $\phi_t \circ\overline{T}_t=
T_t\circ\varphi_t$, we have
\begin{eqnarray*}
\bar{\frkT}_1v=\frkT_1v+T\rho^L(x)v-[x,Tv]_\g=\frkT_1 v+(\partial_T
x)(v),\quad \forall v\in V,
\end{eqnarray*}
which implies that $\bar{\frkT}_1$ and $\frkT_1$ are in the same
cohomology class.   \end{proof}

\begin{defi}
A relative Rota-Baxter operator $T$ is {\bf rigid} if all formal deformations of $T$ are trivial.
\end{defi}

As a cohomological condition of the rigidity, we have the following result which shows that the rigidity of a relative Rota-Baxter operator is a very strong condition.

\begin{pro} Let $T$ be a relative Rota-Baxter operator on a Leibniz algebra $(\g,[\cdot,\cdot]_\g)$ with respect to a representation $(V;\rho^L,\rho^R)$. If $\huaZ^1(V,\g)=\partial_T(\Nij(T))$, then  $T$ is rigid.
\end{pro}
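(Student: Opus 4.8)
The plan is to show that every formal deformation $T_t=\sum_{i=0}^{+\infty}\frkT_i t^i$ with $\frkT_0=T$ is equivalent to the trivial deformation $T$, by killing the lowest-order nontrivial coefficient one order at a time. The first ingredient I would record is a mild strengthening of Proposition~\ref{pro:cocycle}: if $\frkT_1=\cdots=\frkT_{n-1}=0$ for some $n\ge 1$, then collecting the coefficient of $t^n$ in the deformation equation~\eqref{deformation-equation} (for $k=n$) leaves only the terms with $(i,j)=(0,n)$ and $(i,j)=(n,0)$, which gives exactly
\[
[Tu,\frkT_n v]_\g+[\frkT_n u,Tv]_\g-T(\rho^L(\frkT_n u)v+\rho^R(\frkT_n v)u)-\frkT_n(\rho^L(Tu)v+\rho^R(Tv)u)=0,
\]
i.e. $\partial_T\frkT_n=0$. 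Thus the first nonvanishing coefficient of any formal deformation is a $1$-cocycle.

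For the inductive step, suppose $T_t=T+\sum_{i\ge n}\frkT_i t^i$ with $\frkT_n$ a $1$-cocycle. Since $-\frkT_n\in\huaZ^1(V,\g)=\partial_T(\Nij(T))$, there is a Nijenhuis element $w\in\Nij(T)$ with $\partial_T w=-\frkT_n$. I would then form
\[
\phi_t={\Id}_\g+t^nL_w,\qquad \varphi_t={\Id}_V+t^n\rho^L(w),
\]
both invertible over $\K[[t]]$ since their leading terms are the identity. Expanding in powers of $t^n$, condition (i) of the definition of equivalence reduces (using the Leibniz identity~\eqref{Leibniz} at first order) to $[[w,y]_\g,[w,z]_\g]_\g=0$, which is~\eqref{eq:Nij1}; conditions (ii) and (iii) reduce at first order to the representation identities~\eqref{rep-1}--\eqref{rep-2}, and at second order to $\rho^L([w,y]_\g)\rho^L(w)=0$ and $\rho^R([w,y]_\g)\rho^L(w)=0$, which are~\eqref{eq:Nij2}--\eqref{eq:Nij3}. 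Hence $(\phi_t,\varphi_t)$ satisfies (i)--(iii), and by the $\K[[t]]$-linear version of Lemma~\ref{lem:isomorphism} the map $T'_t:=\phi_t^{-1}\circ T_t\circ\varphi_t$ is again a formal deformation of $T$, equivalent to $T_t$. A short computation of its order-$n$ coefficient gives $\frkT_n+T\rho^L(w)-L_w\circ T=\frkT_n+\partial_T w=0$, so the lowest nonvanishing coefficient of $T'_t$ occurs in degree $\ge n+1$.

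Iterating this from $n=1$ produces formal deformations $T_t=T^{[1]}_t,T^{[2]}_t,\dots$, pairwise equivalent, with $T^{[n]}_t=T+O(t^n)$, together with equivalence transformations $(\phi^{(n)}_t,\varphi^{(n)}_t)={\Id}+O(t^n)$ satisfying $T^{[n+1]}_t=(\phi^{(n)}_t)^{-1}\circ T^{[n]}_t\circ\varphi^{(n)}_t$. Because each factor differs from the identity only in degrees $\ge n$, the composites $\Phi_t=\phi^{(1)}_t\phi^{(2)}_t\cdots$ and $\Psi_t=\varphi^{(1)}_t\varphi^{(2)}_t\cdots$ converge in the $t$-adic topology; only $\phi^{(1)}_t$ and $\varphi^{(1)}_t$ contribute in degree $1$, so $\Phi_t$ and $\Psi_t$ have the form~\eqref{eq:phi5} with $x=w_1$, and they inherit conditions (i)--(iii), which are preserved under composition. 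Since $T^{[n]}_t\to T$ as $n\to\infty$, unwinding the recursion to $T^{[n+1]}_t=(\phi^{(1)}_t\cdots\phi^{(n)}_t)^{-1}\circ T_t\circ(\varphi^{(1)}_t\cdots\varphi^{(n)}_t)$ and passing to the limit yields $\Phi_t\circ T=T_t\circ\Psi_t$, i.e. condition (iv) for an equivalence between $T_t$ and the trivial deformation $T$. Therefore $T_t$ is trivial, and $T$ is rigid.

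I expect the main obstacle to be bookkeeping rather than conceptual. The delicate points are verifying that each degree-$n$ transformation built from the Nijenhuis element $w$ genuinely satisfies the multiplicativity and intertwining conditions (i)--(iii) of the equivalence relation (this is exactly where the Nijenhuis conditions~\eqref{eq:Nij1}--\eqref{eq:Nij3} enter, and where the hypothesis $\huaZ^1(V,\g)=\partial_T(\Nij(T))$ is indispensable, since it is what guarantees the corrector $w$ can be taken Nijenhuis), and then controlling the $t$-adic convergence and confirming the form~\eqref{eq:phi5} of the infinite composite of equivalences.
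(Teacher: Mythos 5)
Your proof is correct and follows essentially the same route as the paper's: use the cocycle property of the lowest nonvanishing coefficient, write it as $-\partial_T w$ for a Nijenhuis element $w$, conjugate by $({\Id}_\g+t^nL_w,\,{\Id}_V+t^n\rho^L(w))$ to push the deformation to higher order, and iterate. The only difference is that the paper compresses the induction, the $t$-adic convergence of the composite equivalences, and the verification of conditions (i)--(iii) into the phrase ``by repeating the argument,'' whereas you spell these steps out explicitly.
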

\begin{proof} Let
$T_t=\sum_{i=0}^{+\infty}\frkT_i t^i$ be a formal
deformation of the relative Rota-Baxter operator $T$. By Proposition
\ref{pro:cocycle}, we deduce $\frkT_1\in \huaZ^1(V,\g)$. By the
assumption $\huaZ^1(V,\g)=\partial_T(\Nij(T))$,  we obtain $\frkT_1=-\partial_T x$ for some $x\in\Nij(T)$. Then
setting $\phi_t={\Id_\g}+tL_x$ and $\varphi_t={\Id}_V+t\rho^L(x)$,
we get a formal deformation $\overline{T}_t:=\phi_t^{-1}\circ
T_t\circ\varphi_t.$ Thus, $\overline{T}_t$ is equivalent to $T_t$.
Moreover, we have
\begin{eqnarray*}
\overline{T}_t(v)&=&({\Id}_\g-L_xt+L^2_xt^2+\cdots+(-1)^iL^i_xt^{i}+\cdots)(T_t(v+\rho^L(x)vt))\\
            &=&T(v)+(\frkT_1v+T\rho^L(x)(v)-[x,Tv]_\g)t+\bar{\tau}_2(v)t^2+\cdots\\
            &=&T(v)+\bar{\frkT}_2(v)t^2+\cdots.
\end{eqnarray*}
Then by repeating the argument,  we find that $T_t$ is equivalent to $T$. \end{proof}

\subsection{Deformations of order $n$ of a relative Rota-Baxter operator}
We introduce a cohomology class associated to any deformation of order $n$ of a relative Rota-Baxter operator, and show that a deformation of order $n$ of a relative Rota-Baxter operator is extensible if and only if this cohomology class is trivial. Thus we call this cohomology class the obstruction class of a deformation of order $n$ being extensible.

\begin{defi}Let $T$ be  a relative Rota-Baxter operator on a Leibniz algebra $(\g,[\cdot,\cdot]_\g)$ with respect to a representation $(V;\rho^L,\rho^R)$.
If $T_t=\sum_{i=0}^n\frkT_i t^i$ with $\frkT_0=T$,
$\frkT_i\in\Hom_{\K}(V,\g)$, $i=2,\cdots, n$, defines a
$\K[t]/(t^{n+1})$-module map from $V[t]/(t^{n+1})$ to the Lie
algebra $\g[t]/(t^{n+1})$  satisfying
\begin{eqnarray}
[T_t(u),T_t(v)]_\g=T_t\Big(\rho^L(T_t(u))v+\rho^R(T_t(v))u\Big),\;\;\forall
u,v\in V, \mlabel{O-operator of order n}
\end{eqnarray} we say that $T_t$
is an {\bf  order $n$ deformation} of the relative Rota-Baxter operator $T$.
\end{defi}

\begin{rmk} Obviously, the left hand side of Eq.~(\ref{O-operator of order
n}) holds in the Lie algebra $\g[t]/(t^{n+1})$ and the  right  hand
side makes sense since $T_t$ is a $\K[t]/(t^{n+1})$-module
map.
\end{rmk}

\begin{defi}
Let $T_t=\sum_{i=0}^n\frkT_i t^i$ be an order $n$ deformation  of
a relative Rota-Baxter operator $T$ on a Leibniz algebra $(\g,[\cdot,\cdot]_\g)$ with respect to a representation $(V;\rho^L,\rho^R)$.
 If there
exists a $1$-cochain $\frkT_{n+1}\in \Hom_{\K}(V,\g)$ such that
$\widetilde{T}_t=T_t+\frkT_{n+1}t^{n+1}$ is an order $n+1$
deformation of the relative Rota-Baxter operator $T$, then we say that $T_{t}$
is {\bf extendable}.
\end{defi}

\begin{pro}
Let $T_t=\sum_{i=0}^n\frkT_i t^i$ be an order $n$
deformation  of a  relative Rota-Baxter operator $T$ on a Leibniz algebra $(\g,[\cdot,\cdot]_\g)$ with respect to a representation $(V;\rho^L,\rho^R)$. Define $\Ob_T \in
C^2(V,\g)$ by
\begin{eqnarray}
\Ob_T(u,v)=\sum\limits_{i+j=n+1\atop
i,j\geq1}\Big([\frkT_iu,\frkT_jv]_\g-\frkT_i\big(\rho^L(\frkT_ju)v+\rho^R(\frkT_jv)u\big)\Big),\;\;
\forall u,v\in V. \mlabel{ob}
\end{eqnarray}
Then the 2-cochain $\Ob_T$ is a $2$-cocycle, that is, $\partial_T
\Ob_T=0$.
\end{pro}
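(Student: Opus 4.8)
The plan is to work entirely inside the graded Lie algebra $(C^*(V,\g),\{\cdot,\cdot\})$ of Theorem~\ref{huaO-MC}, in which a relative Rota-Baxter operator is precisely a Maurer-Cartan element (so that $T$ satisfies $\{T,T\}=0$), and in which $d_Tf=\{T,f\}$ agrees with $\partial_T$ up to sign by Theorem~\ref{partial-to-derivation}. The guiding idea is that $\Ob_T$ is the first potentially nonvanishing coefficient of the Maurer-Cartan curvature of the truncated series, and that the Jacobi identity $\{X,\{X,X\}\}=0$ for the odd element $X$ forces this coefficient to be $d_T$-closed. This replaces a long direct manipulation of all the deformation equations of order $\le n$ by a single coefficient extraction.

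Concretely, I would first regard $\widehat{T}_t=\sum_{i=0}^{n}\frkT_i t^i$ as an \emph{honest} power series in $\Hom_{\K}(V,\g)[[t]]$ rather than a class modulo $t^{n+1}$, and record that the coefficient of $t^k$ in $\tfrac12\{\widehat{T}_t,\widehat{T}_t\}$ equals $\sum_{i+j=k}\big([\frkT_i u,\frkT_j v]_\g-\frkT_i(\rho^L(\frkT_j u)v+\rho^R(\frkT_j v)u)\big)$. The order $n$ deformation equations \eqref{deformation-equation} say exactly that these coefficients vanish for $0\le k\le n$. Moreover, since $\widehat{T}_t$ carries no $\frkT_{n+1}$ term, every summand with $i=0$ or $j=0$ drops out of the $t^{n+1}$ coefficient (as it would force $j=n+1$ or $i=n+1$), so that the $t^{n+1}$ coefficient is exactly $\Ob_T$ by \eqref{ob}. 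Hence
\[
\{\widehat{T}_t,\widehat{T}_t\}=2\,\Ob_T\, t^{n+1}+O(t^{n+2}).
\]

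Next I would feed this into the graded Jacobi identity, which gives $\{\widehat{T}_t,\{\widehat{T}_t,\widehat{T}_t\}\}=0$ identically in $t$; this is the same identity on the odd element $\widehat{T}_t$ that yields $d_T\circ d_T=0$ in \eqref{eq:huaO-dT} when $\{T,T\}=0$. Now extract the coefficient of $t^{n+1}$ on the left. Writing $\{\widehat{T}_t,\widehat{T}_t\}=\sum_{k\ge n+1}D_k t^k$ with $D_{n+1}=2\,\Ob_T$ and $\widehat{T}_t=\sum_{l\ge 0}\frkT_l t^l$ with $\frkT_0=T$, the only contribution to $t^{n+1}$ comes from $l=0,\ k=n+1$, giving $\{T,2\,\Ob_T\}=2\{T,\Ob_T\}$. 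Therefore $\{T,\Ob_T\}=0$, that is $d_T\Ob_T=0$, and since $\Ob_T\in C^2(V,\g)$, Theorem~\ref{partial-to-derivation} gives $\partial_T\Ob_T=-d_T\Ob_T=0$.

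The only genuine input is the Jacobi identity $\{\widehat{T}_t,\{\widehat{T}_t,\widehat{T}_t\}\}=0$, which is the main conceptual step. Everything else is bookkeeping: the identification of $\Ob_T$ with the $t^{n+1}$-coefficient of $\tfrac12\{\widehat{T}_t,\widehat{T}_t\}$, and the careful tracking of the normalization constants and signs relating $\{\cdot,\cdot\}$, $d_T$ and $\partial_T$. I expect pinning down these conventions to be the only delicate point; once they are fixed, no further computation is required. If one preferred a hands-on version, the same conclusion would follow by substituting $\{T,\frkT_k\}=-\tfrac12\sum_{a+b=k,\,a,b\ge1}\{\frkT_a,\frkT_b\}$ (read off from the vanishing of the order $\le n$ coefficients) into $\{T,\Ob_T\}$ and collapsing the resulting double sum by the graded Jacobi identity, but the power-series argument above avoids that calculation.
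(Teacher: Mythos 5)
Your proof is correct, and it reorganizes the argument in a way that is genuinely different from (and arguably cleaner than) the paper's. The paper also works in the graded Lie algebra of Theorem~\ref{huaO-MC}: it writes $\Ob_T=\tfrac12\sum_{i+j=n+1,\,i,j\ge1}\{\frkT_i,\frkT_j\}$, rewrites the order~$n$ deformation equations as $\tfrac12\sum_{k+l=i,\,k,l\ge1}\{\frkT_k,\frkT_l\}=-\{T,\frkT_i\}$ for $0\le i\le n$, and then computes $\partial_T\Ob_T=-\{T,\Ob_T\}$ by expanding with the graded Jacobi identity, substituting these relations, and collapsing the resulting triple sum $\sum_{i'+i''+j=n+1}\{\{\frkT_{i'},\frkT_{i''}\},\frkT_j\}$ to zero (a step that implicitly uses the vanishing of the fully symmetrized iterated bracket of odd elements). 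Your power-series formulation bypasses all of this: once you observe that $\{\widehat T_t,\widehat T_t\}=2\,\Ob_T\,t^{n+1}+O(t^{n+2})$ --- which is exactly the paper's identity $\Ob_T=\tfrac12\sum\{\frkT_i,\frkT_j\}$ combined with the vanishing of the lower-order coefficients --- the single identity $\{\widehat T_t,\{\widehat T_t,\widehat T_t\}\}=0$ for the odd polynomial element $\widehat T_t$ yields $\{T,\Ob_T\}=0$ by pure degree counting in $t$: no term other than $l=0$, $k=n+1$ can contribute to the $t^{n+1}$ coefficient precisely because the curvature starts at order $n+1$. This makes conceptually transparent why the obstruction is a cocycle (it is the leading coefficient of a Maurer--Cartan curvature), whereas the paper's version makes explicit, coefficient by coefficient, where each deformation equation enters. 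Two small points you should pin down to make the write-up airtight, both shared with the paper rather than gaps of your own: (i) the polarization identity asserting that for $1$-cochains the $t^k$-coefficient of $\tfrac12\{\widehat T_t,\widehat T_t\}$ is $\sum_{i+j=k}\bigl([\frkT_iu,\frkT_jv]_\g-\frkT_i(\rho^L(\frkT_ju)v+\rho^R(\frkT_jv)u)\bigr)$ (possibly up to one overall sign, which is harmless since only the vanishing of $\{T,\Ob_T\}$ matters); and (ii) the identity $\{X,\{X,X\}\}=0$ for odd $X$ requires the base field to have characteristic different from~$3$ (the paper likewise divides by $2$ and $4$), so both arguments tacitly assume characteristic zero.
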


\begin{proof}
By the bracket in Eq.~\eqref{gla-rota-baxter-leibniz},
we have $\Ob_T=\frac{1}{2}\sum\limits_{i+j=n+1\atop
i,j\geq1}\{\frkT_i,\frkT_j\}. $ Since $T_t$ is an order
$n$ deformation  of the  relative Rota-Baxter operator $T$, for all $0\leq i\leq
n$, we have
\begin{eqnarray}
\sum\limits_{k+l=i\atop
k,l\geq0}\Big([\frkT_ku,\frkT_lv]_\g-\frkT_k\big(\rho^L(\frkT_lu)v+\rho^R(\frkT_lv)u\big)\Big)=0, \quad\forall u,v\in V,
\mlabel{deformation-1}
\end{eqnarray}
which is equivalent to
\begin{eqnarray}
\frac{1}{2}\sum\limits_{k+l=i\atop k,l\geq1}\{\frkT_k,\frkT_l\}=-\{T,\frkT_{i}\},\quad 0\leq i\leq
n.
\mlabel{deformation-2}
\end{eqnarray}
By Theorem \ref{partial-to-derivation} and \eqref{eq:huaO-dT}, we have
\begin{eqnarray*}
\partial_T \Ob_T&=&(-1)^1\{T,\Ob_T\}\\
             &=&-\frac{1}{2}\sum\limits_{i+j=n+1\atop i,j\geq1}\{T,\{\frkT_i,\frkT_j\}\}\\
             &=&-\frac{1}{2}\sum\limits_{i+j=n+1\atop i,j\geq1}\Big(\{\{T,\frkT_i\},\frkT_j\}-\{\frkT_i,\{T,\frkT_j\}\}\Big)\\
             &\stackrel{\eqref{deformation-2}}{=}&\frac{1}{4}\sum\limits_{i'+i''+j=n+1\atop i',i'',j\geq1}\{\{\frkT_{i'},\frkT_{i''}\},\frkT_j\}-\frac{1}{4}
       \sum\limits_{i+j'+j''=n+1\atop i,j',j''\geq1}\{\frkT_i,\{\frkT_{j'},\frkT_{j''}\}\}\\
             &=&\frac{1}{2}\sum\limits_{i'+i''+j=n+1\atop i',i'',j\geq1}\{\{\frkT_{i'},\frkT_{i''}\},\frkT_j\}
             =0.
\end{eqnarray*}
Thus, we obtain that the 2-cochain $\Ob_T$ is a $2$-cocycle. The proof is finished.
\end{proof}

 \begin{defi}
  Let $T_t=\sum_{i=0}^n\frkT_i t^i$ be an order $n$ deformation  of a  relative Rota-Baxter operator $T$ on a Leibniz algebra $(\g,[\cdot,\cdot]_\g)$ with respect to a representation $(V;\rho^L,\rho^R)$.  The cohomology
class $[\Ob_T]\in \huaH^2(V,\g)$ is called the {\bf obstruction
class} of  $T_t$ being extendable.
  \vspace{-.1cm}
  \end{defi}

\begin{thm}\label{thm:extendable}
Let $T_t=\sum_{i=0}^n\frkT_i t^i$ be an order $n$ deformation of a  relative Rota-Baxter operator $T$ on a Leibniz algebra $(\g,[\cdot,\cdot]_\g)$ with respect to a representation $(V;\rho^L,\rho^R)$. Then
$T_t$ is extendable if and only if the obstruction class $[\Ob_T]$ is trivial.
\end{thm}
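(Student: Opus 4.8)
The plan is to expand the order $n+1$ deformation equation for the candidate extension $\widetilde{T}_t=T_t+\frkT_{n+1}t^{n+1}$, isolate the single new constraint imposed on the added coefficient $\frkT_{n+1}$, and recognize this constraint as the statement that $\Ob_T$ is a coboundary.

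First I would observe that $\widetilde{T}_t$ is an order $n+1$ deformation of $T$ precisely when the system \eqref{deformation-equation} holds for all $k$ with $0\le k\le n+1$. Since $T_t$ is already an order $n$ deformation and $\frkT_{n+1}$ does not appear in the equations indexed by $0\le k\le n$, those equations are automatically satisfied. Hence the only genuinely new condition is the coefficient of $t^{n+1}$, namely
\[
\sum_{i+j=n+1,\ i,j\ge 0}\Big([\frkT_i u,\frkT_j v]_\g-\frkT_i\big(\rho^L(\frkT_j u)v+\rho^R(\frkT_j v)u\big)\Big)=0,\qquad\forall u,v\in V.
\]

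Next I would split this sum into the terms involving $\frkT_{n+1}$ --- those indexed by $(i,j)=(0,n+1)$ and $(i,j)=(n+1,0)$, where $\frkT_0=T$ --- and the remaining terms, which run over $1\le i,j\le n$ with $i+j=n+1$. The remaining terms are, by the definition \eqref{ob}, exactly $\Ob_T(u,v)$. Comparing the two $\frkT_{n+1}$-terms with the explicit formula for $\partial_T$ on $1$-cochains recorded right after Definition~\ref{de:opcoh}, one checks termwise that they assemble into $(\partial_T\frkT_{n+1})(u,v)$. Thus the new condition is equivalent to
\[
\partial_T\frkT_{n+1}=-\Ob_T.
\]

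Finally I would conclude both implications at once: a $1$-cochain $\frkT_{n+1}\in\Hom_\K(V,\g)$ solving this equation exists if and only if $\Ob_T$ lies in the image of $\partial_T$, that is, $\Ob_T\in\huaB^2(V,\g)$. Since the preceding proposition shows that $\Ob_T$ is already a $2$-cocycle, this membership is equivalent to the vanishing of the obstruction class $[\Ob_T]\in\huaH^2(V,\g)$, which is precisely the assertion. The only delicate point is bookkeeping --- verifying with the correct signs that the collected $\frkT_{n+1}$-terms reproduce $\partial_T\frkT_{n+1}$ and not some sign variant of it; there is no genuine combinatorial or analytic obstacle beyond this verification.
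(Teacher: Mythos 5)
Your proof is correct and follows essentially the same route as the paper: both reduce extendibility of $T_t$ to the single equation $\partial_T\frkT_{n+1}=-\Ob_T$ arising from the coefficient of $t^{n+1}$, and then use the fact that $\Ob_T$ is a $2$-cocycle to identify solvability of this equation with triviality of $[\Ob_T]\in\huaH^2(V,\g)$. The only cosmetic difference is that the paper packages the computation via the graded Lie bracket (its Eq.~\eqref{deformation-2} with $i=n+1$, using $\{T,\frkT_{n+1}\}=\partial_T\frkT_{n+1}$), whereas you verify the same identity directly from the explicit formula for $\partial_T$ on $1$-cochains.
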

\begin{proof}
Suppose that an order $n$ deformation $T_t$   of the relative Rota-Baxter operator $T$ extends to an order $n+1$ deformation. Then Eq. \eqref{deformation-2} holds for $i=n+1$. Thus, we have
$
\Ob_T=-\partial_T\frkT_{n+1},
$
which implies that the obstruction class $[\Ob_T]$ is trivial.

Conversely, if the obstruction class $[\Ob_T]$ is trivial, suppose that
$
\Ob_T=-\partial_T\frkT_{n+1}
$
for some 1-cochain $\frkT_{n+1} \in\Hom_{\K}(V,\g)$. Set
$
\widetilde{T}_t:=T_t+\frkT_{n+1}t^{n+1}.
$
Then $\widetilde{T}_t$ satisfies Eq.~\eqref{deformation-1} for $0\leq i\leq n+1$. So $\widetilde{T}_t$ is an order $n+1$ deformation, which means that  $T_t$ is extendable.
  \vspace{-.1cm}
\end{proof}

\begin{cor} Let $T$ be a  relative Rota-Baxter operator $T$ on a Leibniz algebra $(\g,[\cdot,\cdot]_\g)$ with respect to a representation $(V;\rho^L,\rho^R)$. If
$\huaH^2(V,\g)=0$, then every $1$-cocycle in $\huaZ^1(V,\g)$ is
the infinitesimal of some formal deformation of the
relative Rota-Baxter operator $T$.

\end{cor}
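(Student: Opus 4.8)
The plan is to build the desired formal deformation one order at a time, using the obstruction theory established in Theorem~\ref{thm:extendable}. Given a $1$-cocycle $\frkT_1 \in \huaZ^1(V,\g)$, I would first set $\frkT_0 = T$ and consider the order $1$ truncation $T_t = T + \frkT_1 t$. To see this is an order $1$ deformation, I would expand Eq.~\eqref{O-operator of order n} modulo $t^2$: the coefficient of $t^0$ is precisely the relative Rota-Baxter condition \eqref{Rota-Baxter} for $T$, which holds by hypothesis, while the coefficient of $t^1$ is exactly $\partial_T\frkT_1 = 0$, which holds because $\frkT_1$ is a $1$-cocycle. Hence $T + \frkT_1 t$ is an order $1$ deformation whose infinitesimal is $\frkT_1$.

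Next I would argue by induction. Suppose that for some $n \geq 1$ we have constructed $1$-cochains $\frkT_2, \ldots, \frkT_n$ such that $T_t = \sum_{i=0}^n \frkT_i t^i$ is an order $n$ deformation of $T$. Its obstruction $\Ob_T \in C^2(V,\g)$ is defined as in Eq.~\eqref{ob}, and the associated cohomology class $[\Ob_T]$ lives in $\huaH^2(V,\g)$. Since we assume $\huaH^2(V,\g) = 0$, this class is automatically trivial, so Theorem~\ref{thm:extendable} guarantees that $T_t$ is extendable: there exists a $1$-cochain $\frkT_{n+1} \in \Hom_{\K}(V,\g)$ such that $\sum_{i=0}^{n+1}\frkT_i t^i$ is an order $n+1$ deformation. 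This completes the inductive step.

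Running the induction over all $n$ produces a sequence $\{\frkT_i\}_{i\geq 1}$ for which $T_t = \sum_{i=0}^{+\infty}\frkT_i t^i$ satisfies the full system of deformation equations \eqref{deformation-equation} for every $k \geq 0$; equivalently, $T_t$ satisfies Eq.~\eqref{O-operator}, so it is a formal deformation of $T$. By construction its coefficient of $t^1$ is the given $\frkT_1$, so $\frkT_1$ is its infinitesimal, as required.

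Since the entire argument is driven by Theorem~\ref{thm:extendable}, there is no genuinely hard computation. The only points requiring care are verifying the base case, namely that $T+\frkT_1 t$ really is an order $1$ deformation, which reduces exactly to the cocycle condition $\partial_T\frkT_1=0$, and observing that at every stage the obstruction class lies in $\huaH^2(V,\g)$ and therefore vanishes under the stated hypothesis, so the extension never gets stuck.
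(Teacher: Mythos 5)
Your proof is correct and is exactly the argument the paper intends: the corollary is stated without proof as an immediate consequence of Theorem~\ref{thm:extendable}, and your order-by-order induction (base case from the $1$-cocycle condition $\partial_T\frkT_1=0$, inductive step from the vanishing of the obstruction class in $\huaH^2(V,\g)=0$) is the standard way to fill it in. Your observation that each extension leaves the lower-order equations untouched, so the limit series satisfies Eq.~\eqref{deformation-equation} for all $k$, is the only point needing care, and you handle it correctly.
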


\noindent
{\bf Acknowledgements. } This research was partially supported by NSFC (11922110).

\end{document}